\begin{document}
\setlength{\parskip}{0.3\baselineskip}

\newtheorem{theorem}{Theorem}
\newtheorem{corollary}[theorem]{Corollary}
\newtheorem{lemma}[theorem]{Lemma}
\newtheorem{proposition}[theorem]{Proposition}
\newtheorem{definition}[theorem]{Definition}
\newtheorem{remark}[theorem]{Remark}
\renewcommand{\thefootnote}{\alph{footnote}}
\newenvironment{proof}{\smallskip \noindent{\bf Proof}: }{\hfill $\Box$\hspace{1in} \medskip \\ }


\newcommand{\beqaa}{\begin{eqnarray}}
\newcommand{\eeqaa}{\end{eqnarray}}
\newcommand{\beqae}{\begin{eqnarray*}}
\newcommand{\eeqae}{\end{eqnarray*}}


\newcommand{\sii}{\Leftrightarrow}
\newcommand{\imer}{\hookrightarrow}
\newcommand{\imerc}{\stackrel{c}{\hookrightarrow}}
\newcommand{\Con}{\longrightarrow}
\newcommand{\con}{\rightarrow}
\newcommand{\conf}{\rightharpoonup}
\newcommand{\confe}{\stackrel{*}{\rightharpoonup}}
\newcommand{\pbrack}[1]{\left( {#1} \right)}
\newcommand{\sbrack}[1]{\left[ {#1} \right]}
\newcommand{\key}[1]{\left\{ {#1} \right\}}
\newcommand{\dual}[2]{\langle{#1},{#2}\rangle}
\newcommand{\intO}[1]{\int_{\Omega}{#1}\, dx}


\newcommand{\R}{{\mathbb R}}
\newcommand{\N}{{\mathbb N}}
\newcommand{\Z}{{\mathbb Z}}
\newcommand{\Q}{{\mathbb Q}}

\newcommand{\cred}[1]{\textcolor{red}{#1}}

\title{\bf A Note on the Regularity of Thermoelastic Plates with Fractional Rotational Inertial Force  }
\author{
Fredy Maglorio Sobrado  Su\'arez  \\
{\small Department of Mathematics,  Federal University of Technological  of Paran\'a, Brazil}
}
\date{}
\maketitle

\let\thefootnote\relax\footnote{{\it Email address:}   {\rm fredy@utfpr.edu.br} (Fredy Maglorio Sobrado  Su\'arez)}.


\begin{abstract}
The present work intends to complement the study of the regularity of the solutions of the thermoelastic plate with rotacional forces. The rotational forces involve the spectral fractional Laplacian, with power parameter $\tau\in [0,1]$ ( $\gamma(-\Delta) ^\tau u_{tt}$).  Previous research regarding regularity showed that,      as for the analyticity of the semigroup $S(t)=e^{\mathbb{B}t}$  for the Euler-Bernoulli Plate($\tau=0$) model,  the first result was established by Liu and Renardy, \cite{LiuR95} in the case of hinged and clamped boundary conditions, for the case $\tau=1$ (Plate Kirchoff-Love) Lasiecka and Triggiani showed,   that the semigroup is not differentiable \cite{LT1998, LT2000} and more recently in 2020 Tebou et al.\cite{Tebou2020} showed that for $\tau\in (0,\frac{1}{2})$, $S(t)$ is of class Gevrey $s>\frac{2-\tau}{2-4\tau}$.  Our main contribution here is to show that $S(t)$ is of Gevrey class $s>\frac{3-\tau}{2-2\tau}$ when the parameter  $\tau$ lies in the interval $[\frac{1}{2},1)$ and also show that $S(t)$ is not analytic for $\tau\in (0,1]$   both results for Hinged plate/ Dirichlet temperature  boundary conditions.
\end{abstract}

\bigskip
{\sc keyword:} Euler-Bernoulli Plate,  Plate Kirchoff-Love,  Gevrey Class,  Fractional Rotational Inertial force,  Analiticity, Thermoelastic Plates.

\setcounter{equation}{0}

\section{Introduction}
Consider that $\Omega$ is a bounded open subset of $\mathbb{R}^n$, $n\geq 1$,  with sufficiently smooth boundary. The system  we study is given by the following  coupled plate equations:
\begin{eqnarray}
\label{ISplacas0II}
    u_{tt}+\gamma (-\Delta)^\tau u_{tt}+\Delta^2u+\alpha\Delta\theta=0,\quad
    &x\in\Omega,&
t>0, \\
\label{ISplacas1II}
    \theta_t-\kappa\Delta\theta-\beta\Delta  u_t =0,\quad
    &x\in\Omega,&
     t>0,
\end{eqnarray}
satisfying the  boundary conditions
\begin{equation}\label{ISplacas4II}
    u=\Delta u=0,\quad \theta=0,\quad x\in\partial\Omega,\ t>0,
\end{equation}
and prescribed initial data
\begin{eqnarray}
\label{ISplacas2II} u(x,0)=u_0(x),\ u_t(x,0)=u_1(x),\
\theta(x,0)=\theta_0(x), \quad x\in\Omega.
\end{eqnarray}
Here $u$, denote the transversal displacements of the plates. The  coefficients and the inertia rotational is  given by
  $\gamma$  positive number. The exponent $\tau$ lies in the interval $[0,1]$, the positive numbers $\alpha$  and $\beta$ are  the coupling coefficient.

The system we study is given by the following  coupled plate equations:
\begin{eqnarray}
\label{ISplacas0}
    u_{tt}+\gamma (-\Delta)^\tau u_{tt}+\Delta^2u+\alpha\Delta\theta=0,\quad
    &x\in\Omega,&
t>0, \\
\label{ISplacas1}
    \theta_t-\kappa\Delta\theta-\beta\Delta  u_t =0,\quad
    &x\in\Omega,&
     t>0,
\end{eqnarray}

and prescribed initial data
\begin{eqnarray}
\label{ISplacas2} u(x,0)=u_0(x),\ u_t(x,0)=u_1(x),\
\theta(x,0)=\theta_0(x), \quad x\in\Omega.
\end{eqnarray}
Here $u$, denote the transversal displacements of the plates.  The  inertia rotational coefficients given by
 $\gamma$  positive number. The exponent $\tau$ lies in the interval   $[0,1]$,  $\alpha$  and $\beta $ constant positive are   the coupling coefficient.

Various researchers year after year have been devoting their attention to the study of the asymptotic behavior and regularity of the solutions of the thermoelastic plate system, especially when considering the rotational inertial force in the system given by: $\gamma(-\Delta)^\tau u_{tt}$.  We emphasize that this mathematical model, when the parameter $\tau$ takes the values $0$ and $1$, are called Euler-Bernoulli and Kirchoff-Love thermoelastic plates, respectively.  We know that regarding the asymptotic behavior the  best decay rate is in exponential rate,  the first results for the Euler-Bernoulli model, it is well known that the underlying semigroup is both analytic and exponentially stable in the case of hinged boundary conditions and clamped boundary condition.  However, for the Kirchhoff model, only the exponential stability of the semigroup is true; Lasiecka and Triggiani showed that the semigroup is not only not analytic, it is not even differentiable \cite{LT1998,  LT2000} for any of the hinged or clamped boundary conditions.

 Later, many other works followed, establishing the exponential stability of thermoelastic plates (Euler-Bernoulli and Kirchoff-Love) with various boundary conditions, e.g. \cite{AvalosL1997, AvalosL1998, LiuZ1997, LTebou2013}.  Regarding the analyticity of the semigroup for the Euler-Bernoulli model, the first result was established by Liu and Renardy, [19] in the case of bounded and articulated boundary conditions. Subsequently, Liu and Liu, \cite{LiuLiu1997}, and Lasiecka and Triggiani \cite{LT1998,LT1998A,LT1998B,  LT1998C} demonstrated other analyticity results under various boundary conditions.

In more recent research from 2020 Tebou et al. \cite{Tebou2020} studied thermoelastic plates considering the fractional rotational inertial force ($\gamma(-\Delta)^\tau u_{tt}$ for the parameter $\tau\in [0,1]$. In $\Omega$, limited open subset of $\mathbb{R}^n$, $n\geq 1$,  with smooth enough boundary In this research the authors prove that the semigroup associated to the system is the Gevrey class $s$ for each $s>\frac{2-\tau}{2-4\tau}$ for both:  the Hinged plate/Dirichlet temperature boundary conditions  and Clamped plate/Dirichlet temperature  boundary conditions when the parameter $\tau$ lies in the interval $(0,\frac{1 }{2})$, also show that the semigroup $S(t)$ is exponentially stable for Hinged boundary conditions,  for $\tau$ in the interval $ [0, 1]$ and finish their investigation,  constructing a counterexample, that,  under hinged boundary conditions,  the semigroup is not analytic, for all $ \tau$ in the interval $(0,1)$.  To determine the Gevrey class of $S(t)$ use the domain method of the frequency,  the appropriate decompositions of the components of the system and the use of Lions'  interpolation inequalities.  More recent research in this direction can be found at \cite{HPFredy2019,  HSLiuRacke2019, BrunaJMR2022, Tebou2021}.

The rest of this article is organized as follows: in section 2, we study the well-posedness of the system \eqref{ISplacas-20}-\eqref{ISplacas-35} through semigroup theory.  We leave our main contributions for the third section, which is subdivided into two subsections. In 3.1 we showed that the semigroup $S(t)=e^{\mathbb{B}t}$ is not analytic when $t\in(0 ,1]$ and in the last subsection 3.2 we showed that the underlying semigroup is Gevrey class $s$ for each $s>\frac{3-\tau}{2-2\tau}$ for Hinged plate/Dirichlet temperature boundary condition when the parameter $\tau$ is in the interval $[\frac{1}{2}, 1]$.  We end this investigation with one  observation of  the exponential decay of $S(t)$ for $\tau\in [0,1]$.

\subsubsection{Well-Posedness of the System of plates thermoelastic}
In this section we will use the semigroup theory for assure the existence and uniqueness of strong solutions for the system \eqref{ISplacas-20}-\eqref{ISplacas-35}.  Before this, we are going to recall some preliminary results.
\begin{theorem}[See Theorem 1.2.4 in \cite{LiuZ}] \label{TLiuZ}
Let $\mathbb{B}$ be a linear operator with domain $D(\mathbb{B})$ dense in a Hilbert space $\mathbb{X}$. If $\mathbb{B}$ is dissipative and $0\in\rho(\mathbb{B})$, the resolvent set of $\mathbb{B}$, then $\mathbb{B}$ is the generator of a $C_0$-semigroup of contractions on $\mathbb{X}$.
\end{theorem}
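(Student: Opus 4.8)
The plan is to recognize this statement as a packaged version of the Lumer--Phillips theorem and to supply its range condition directly from the hypothesis $0\in\rho(\mathbb{B})$. First I would record the dissipativity assumption in quantitative form: for every $x\in D(\mathbb{B})$ one has $\mathrm{Re}\,\langle \mathbb{B}x,x\rangle\le 0$, from which, for every real $\lambda>0$, the estimate $\|(\lambda I-\mathbb{B})x\|\ge \lambda\|x\|$ follows by expanding $\|(\lambda I-\mathbb{B})x\|^2$ and using Cauchy--Schwarz. This shows at once that $\lambda I-\mathbb{B}$ is injective and that, whenever it is also onto, its inverse satisfies the resolvent bound $\|(\lambda I-\mathbb{B})^{-1}\|\le 1/\lambda$.

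The remaining ingredient is surjectivity of $\lambda I-\mathbb{B}$ for at least one $\lambda>0$, and this is exactly what $0\in\rho(\mathbb{B})$ delivers. Since $0\in\rho(\mathbb{B})$ means $\mathbb{B}^{-1}$ exists and is bounded, $\mathbb{B}$ is closed, so its resolvent set is open in $\mathbb{C}$. Hence there is a radius $r>0$ with $\{z\in\mathbb{C}:|z|<r\}\subset\rho(\mathbb{B})$, and for any real $\lambda$ with $0<\lambda<r$ the operator $\lambda I-\mathbb{B}$ is a bijection of $D(\mathbb{B})$ onto $\mathbb{X}$; in particular $\mathrm{Range}(\lambda I-\mathbb{B})=\mathbb{X}$.

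With $D(\mathbb{B})$ dense, dissipativity in hand, and the range condition verified for this particular $\lambda>0$, the hypotheses of the Lumer--Phillips theorem are met and $\mathbb{B}$ generates a $C_0$-semigroup of contractions on $\mathbb{X}$. Equivalently, combining the resolvent bound $\|(\lambda I-\mathbb{B})^{-1}\|\le 1/\lambda$ along the positive real axis with closedness and density places us precisely in the Hille--Yosida framework for contraction semigroups, giving the same conclusion.

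I do not anticipate a genuine obstacle, since the result is a repackaging of a classical generation theorem. The one step deserving care is the passage from the single point $0\in\rho(\mathbb{B})$ to surjectivity of $\lambda I-\mathbb{B}$ for some $\lambda>0$, which rests on the openness of the resolvent set and on reading the dissipativity inequality with the correct sign convention so that it is the contraction semigroup (and not a reverse-time object) that is generated; this is bookkeeping rather than a real difficulty.
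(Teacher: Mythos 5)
Your proof is correct. Note, however, that the paper itself gives no proof of this statement: it is quoted verbatim as Theorem 1.2.4 of the cited Liu--Zheng monograph and used as a black box. Your argument --- dissipativity gives $\|(\lambda I-\mathbb{B})x\|\ge\lambda\|x\|$ for $\lambda>0$, while $0\in\rho(\mathbb{B})$ forces $\mathbb{B}$ to be closed, hence $\rho(\mathbb{B})$ is open and some real $\lambda_0>0$ lies in it, supplying the range condition for Lumer--Phillips --- is precisely the standard proof given in that reference, so there is nothing to compare beyond noting the agreement.
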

To re-write the system \eqref{ISplacas-20}-\eqref{ISplacas-35} in an abstract form, considering the operator self-adjoint, positive and has inverse compact on a complex Hilbert space $H=L^2(\Omega)$ $A:D(A)\subset L^2(\Omega)\con L^2(\Omega)$, where
\beqaa
A:=-\Delta,\quad D(A)=H^2(\Omega)\cap H^1_0(\Omega).\label{op-A}
\eeqaa
 Therefore, the operator $A^{\theta}$ is self-adjoint positive for all $\theta\in\R$, bounded for $\theta\leq 0$, and  the embedding
\beqae
D(A^{\theta_1})\hookrightarrow D(A^{\theta_2}),
\eeqae
is continuous for $\theta_1>\theta_2$. Here, the norm in $D(A^{\theta})$ is given by $\|u\|_{D(A^{\theta})}:=\|A^{\theta}u\|$, $u\in D(A^{\theta})$, where $\|\cdot\|$ denotes the norm in the Hilbert space $L^2(\Omega)$.  Some of these spaces are  $D(A^0)=L^2(\Omega)$, $D(A^{1/2})=H_0^1(\Omega)$ and $D(A^{-1/2})=H^{-1}(\Omega)$.

It is known that the above operator is self-adjoint, positive and has inverse compact. Using this notation to find a solution of the the system \eqref{ISplacas0}-\eqref{ISplacas2} is equivalent to find $u,v$ in some subset of $D(A)$ such that satisfy the equations
\begin{eqnarray}
\label{ISplacas-20}
    u_{tt}+\gamma A^\tau u_{tt}+ A^2 u-\alpha A\theta=0, \\
\label{ISplacas-25}
    \theta_t+\kappa A\theta+\beta A u_t =0,
\end{eqnarray}
satisfying the  boundary conditions (Hinged plate/Dirichlet temperature):
\begin{equation}\label{ISplacas-30}
    u=\Delta u=0,\quad \theta=0,\quad x\in\partial\Omega,\ t>0,
\end{equation}
and the initial data
\begin{eqnarray}
\label{ISplacas-35} u(x,0)=u_0,\ u_t(x,0)=u_1,\
\theta(x,0)=\theta_0, \qquad x\in\Omega.
\end{eqnarray}

For  $\gamma$ positive we can extend the operator $I+\gamma A^\tau$ in the following sense:
\begin{equation*}\label{BIsometrica}
(I+\gamma A^\tau)\colon D(A^{\tau/2})\to \;D(A^{-\tau/2})
\end{equation*}
defined by
\begin{equation}\label{EqPiDual}
\dual{( I+\gamma A^\tau)z_1}{z_2}_{D(A^{-\tau/2})\times D(A^{\tau/2})}=\dual{z_1}{z_2}+\gamma\dual{A^{\tau/2}z_1}{A^{\tau/2}z_2},
\end{equation}
for $z_1,z_2\in D(A^{\tau /2})$, where $\dual{\cdot}{\cdot}$ denotes the inner product in the Hilbert space $D(A^0)$.  Note that this operator is an isometric operator when we consider the equivalent norm in the space $D(A^{\tau/2})$: $\pbrack{\|z\|^2+\gamma\|A^{\tau/2} z\|^2}^{1/2}:=\|z\|^{1/2}_{D(A^\frac{\tau}{2})}.$

Now, we will use a semigroup approach to study the well-posedness of the system \eqref{ISplacas-20}-\eqref{ISplacas-35}.  Tomando $v=u_t$ and applying the operators $( I+\gamma A)^{-1}$,  to equations \eqref{ISplacas-20} and considering $U=(u,v,\theta)$ and $U_0=(u_0,v_0,\theta_0)$, the system \eqref{ISplacas0}-\eqref{ISplacas2}, can be written in the following abstract framework
\begin{equation}\label{Fabstrata}
    \frac{d}{dt}U(t)=\mathbb{B} U(t),\quad    U(0)=U_0,
\end{equation}
 where the operator $\mathbb{B}$ is given by
\begin{equation}\label{operadorAgamma}
 \mathbb{B}U:=\Big( v,(I+\gamma A^\tau)^{-1}\key{- A^2 u+\alpha A \theta}, -\kappa A\theta -\beta Av  \Big),
\end{equation}
for $U=(u,v,\theta)$. This operator will be defined in a suitable subspace of the phase space
$$
\mathbb{H}:=D(A)\times D(A^\frac{\tau}{2})\times D(A^0),
$$
which in view of (\ref{EqPiDual}), it is a Hilbert space with the inner product
\begin{eqnarray*}\nonumber
\dual{ U_1}{U_2} & := &\beta \dual{Au_1}{Au_2}_{D(A^0)\times D(A^0)}+\beta \dual{( I+\gamma A^\tau)v_1}{v_2}_{D(A^{-\frac{\tau}{2}})\times D(A^\frac{\tau}{2})}\\
& & \times \alpha \dual{\theta_1}{\theta_2}_{D(A^0)\times D(A^0)},  \label{pinternoa2}
\end{eqnarray*}
for $U_i=(u_i, v_i, \theta_i)\in \mathbb{H}$,  $i=1,2$  and induced norm
\begin{equation}\label{NORM}
\|U\|_\mathbb{H}^2:=\beta\|Au\|^2+\beta\|v\|^2_{D(A^\frac{\tau}{2})}+\alpha\|\theta\|^2.
\end{equation}
In these conditions, we define the domain of $\mathbb{B}$ as
\begin{equation}\label{dominioB}
    \mathcal{D}(\mathbb{B}):= \Big \{ U\in \mathbb{H} \colon  v\in D(A),  -A u+\alpha\theta\in D( A^{1-\frac{\tau}{2}}),  -\kappa\theta-\beta v\in D(A) \Big\}.
\end{equation}

To show that the operator $\mathbb{B}$ is the generator of a $C_0$- semigroup we invoke a result from Liu-Zheng' book Theorem\eqref{TLiuZ}.

Let us see that the operator $\mathbb{B}$ satisfies the conditions of  Theorem\eqref{TLiuZ}.  Clearly,  we see that $D(\mathbb{B})$ is dense in $\mathbb{H}$. Taking the inner product of $\mathbb{B}U$ with $U$ we have
\begin{equation}\label{eqdissipative}
\text{Re}\dual{\mathbb{B}U}{U}=  -\kappa\alpha\|A^\frac{1}{2} \theta \|^2, \quad\forall\ U\in D(\mathbb{B}),
\end{equation}
that is, the operator $\mathbb{B}$ is dissipative.\\
To complete the conditions of the above theorem, it remains to show that $0\in\rho(\mathbb{B})$. Let $F=(f,g,h)\in \mathbb{H}$, let us see that the stationary problem $ \mathbb{B}U=F$ has a solution $U=(u,v,\theta)$.  From the definition of the operator  $\mathbb{B}$ given in
\eqref{operadorAgamma} this system can be written as                     
\begin{align}\label{exist-10A}
	v=f,\qquad& \quad\quad  - A^2u+\alpha A\theta=(I+\gamma A^\tau)g\quad\quad 
	-\kappa A\theta-\beta Av=h.
\end{align}
Therefore, it is not difficult to see that there exist only one solution $u$ and $\theta$
of the system
\begin{multline}\label{Eliptico001}
 -A^2u+\alpha A\theta=(I+\gamma A^\tau)g \in D(A^\frac{\tau}{2})\\
  -\kappa A\theta=h+\beta Af \in D(A^0), 
\end{multline}
from where we have that
$$\|U\|_\mathbb{H}\leq C\|F\|_\mathbb{H},$$
wich in particular implies that $\|\mathbb{B}^{-1}F\|_\mathbb{H}\leq \|F\|_\mathbb{H}$, so we have  that   $0$ belongs to the resolvent set $\rho(\mathbb{B})$.  Consequently, from Theorem \ref{TLiuZ}  we have  $\mathbb{B}$ is the generator of a contractions semigroup.
 As a consequence of the previous Theorem\eqref{TLiuZ},  we obtain
\begin{theorem}
Given $U_0\in\mathbb{H}$ there exists a unique weak solution $U$ to  the problem \eqref{Fabstrata} satisfying 
$$U\in C([0, +\infty), \mathbb{H}).$$
Futhermore,  if $U_0\in  D(\mathcal{B}^k), \; k\in\mathbb{N}$, then the solution $U$ of \eqref{Fabstrata} satisfies
$$U\in \bigcap_{j=0}^kC^{k-j}([0,+\infty),  D(\mathcal{B}^j).$$
\end{theorem}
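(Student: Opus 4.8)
The plan is to read off both assertions from the fact, just established, that $\mathbb{B}$ generates a $C_0$-semigroup of contractions $S(t)=e^{\mathbb{B}t}$ on $\mathbb{H}$. Once Theorem \ref{TLiuZ} has been applied---using the dissipativity \eqref{eqdissipative} together with $0\in\rho(\mathbb{B})$---the abstract Cauchy problem \eqref{Fabstrata} admits, for every $U_0\in\mathbb{H}$, the mild solution $U(t)=S(t)U_0$. The strong continuity of the semigroup gives at once $U\in C([0,+\infty),\mathbb{H})$, and uniqueness follows because any weak solution of \eqref{Fabstrata} with datum $U_0$ must coincide with $S(t)U_0$, by the standard characterization of the generator. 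This disposes of the first assertion, which is exactly the case $k=0$ of the regularity statement.

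For the regularity part I would argue by induction on $k$, invoking the classical differentiability properties of $C_0$-semigroups on the domain of their generator. First I would recall that if $U_0\in D(\mathbb{B})$ then $t\mapsto S(t)U_0$ is continuously differentiable, $S(t)U_0\in D(\mathbb{B})$ for all $t\ge 0$, and
\[
\frac{d}{dt}S(t)U_0=\mathbb{B}S(t)U_0=S(t)\mathbb{B}U_0 ,
\]
so that $U\in C^1([0,+\infty),\mathbb{H})\cap C([0,+\infty),D(\mathbb{B}))$, which settles the case $k=1$.

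For the inductive step, suppose the claim holds for $k-1$ and take $U_0\in D(\mathbb{B}^k)$. Then $\mathbb{B}U_0\in D(\mathbb{B}^{k-1})$, and since $\mathbb{B}$ commutes with $S(t)$ on its domain, the function $\mathbb{B}U(t)=S(t)\mathbb{B}U_0$ inherits, by the induction hypothesis applied to the datum $\mathbb{B}U_0$, the regularity $\bigcap_{j=0}^{k-1}C^{\,k-1-j}([0,+\infty),D(\mathbb{B}^j))$. Using the identity $U'(t)=\mathbb{B}U(t)$ and integrating in time---which raises the order of time differentiability by one while preserving the spatial space, the constant of integration $U_0$ lying in $D(\mathbb{B}^j)$ for every $j\le k$---one obtains $U\in C^{\,k-j}([0,+\infty),D(\mathbb{B}^j))$ for $j=0,\dots,k-1$; the top index $j=k$ is handled directly, since $\mathbb{B}^kU(t)=S(t)\mathbb{B}^kU_0$ is continuous, giving $U\in C([0,+\infty),D(\mathbb{B}^k))$. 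Intersecting over $j$ yields precisely $U\in\bigcap_{j=0}^{k}C^{\,k-j}([0,+\infty),D(\mathbb{B}^j))$.

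I expect no genuine obstacle here: the argument is a direct consequence of semigroup theory, and the only point demanding care is the bookkeeping in the induction---keeping track of how each application of $\mathbb{B}$ trades one order of time differentiability for one order in the domain scale---so that the final intersection of spaces is stated correctly.
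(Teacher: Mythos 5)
Your argument is correct and follows exactly the route the paper intends: the paper offers no written proof, stating the theorem simply as a consequence of Theorem \ref{TLiuZ} (i.e., of the fact that $\mathbb{B}$ generates a $C_0$-semigroup of contractions), and your write-up merely supplies the standard semigroup details --- the mild solution $U(t)=S(t)U_0$, strong continuity, and the classical induction on $k$ for data in $D(\mathbb{B}^k)$. Nothing in your proposal diverges from or adds to what the paper implicitly invokes.
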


In what follows, $C$ and $C_\delta$  will denote a positive constant that assumes different values in different places.
\section{Regularity they Euler-Bernoulli to Kirchhoff-Love Thermoelastic Plates }

In this section we discuss the regularity of the semigroup $S(t)=e^{\mathbb{B}t}$,  in two subsections:  First we analyze the lack of analyticity of $S(t)$ for $0<\tau\leq 1$, then we study the Gevrey class of $S(t)$ for  $\frac{1}{2}\leq\tau<1$.

The following theorem characterizes the analyticity of $S(t)$:

\begin{theorem}[see \cite{LiuZ}]\label{LiuZAnaliticity}
    Let $S(t)=e^{\mathbb{B}t}$ be $C_0$-semigroup of contractions  on a Hilbert space $\mathbb{H}$.   Suppose that
    \begin{equation*}
    \rho(\mathbb{B})\supseteq\{ i\lambda/ \lambda\in \R \}  \equiv i\R
    \end{equation*}
     Then $S(t)$ is analytic if and only if
    \begin{equation}\label{Analyticity}
     \limsup\limits_{|\lambda|\to
        \infty}
    \|\lambda(i\lambda I-\mathbb{B})^{-1}\|_{\mathcal{L}(\mathbb{H})}<\infty.
    \end{equation}
    holds.
\end{theorem}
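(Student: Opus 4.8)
The plan is to reduce the statement to the standard sectorial characterization of analytic semigroups and then do the work on the imaginary axis. Recall that $S(t)$ is analytic if and only if there is an angle $\delta\in(0,\pi/2)$ such that the sector $\Sigma_\delta:=\{\lambda\in\mathbb{C}\colon |\arg\lambda|<\tfrac{\pi}{2}+\delta\}\setminus\{0\}$ is contained in $\rho(\mathbb{B})$ and
\[
\sup_{\lambda\in\Sigma_\delta}\|\lambda(\lambda I-\mathbb{B})^{-1}\|_{\mathcal{L}(\mathbb{H})}<\infty .
\]
Granting this, the forward implication is immediate: if $S(t)$ is analytic, then restricting the uniform sector bound to the imaginary axis $\lambda=i\mu$ gives $\|i\mu(i\mu I-\mathbb{B})^{-1}\|\le M$, which is precisely \eqref{Analyticity}.

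The substance lies in the reverse implication. First I would exploit that $S(t)$ is a semigroup of contractions, so $\mathbb{B}$ is dissipative and the whole open right half-plane belongs to $\rho(\mathbb{B})$ with $\|(\lambda I-\mathbb{B})^{-1}\|\le 1/\mathrm{Re}\,\lambda$; this already yields the desired estimate on every ray lying strictly inside the right half-plane. What remains is to propagate the resolvent and the bound \emph{across} the imaginary axis. For this I would fix $\mu\in\R$ with $|\mu|$ large and expand the resolvent about $i\mu$ through the Neumann series
\[
(\lambda I-\mathbb{B})^{-1}=\sum_{n=0}^{\infty}\big(i\mu-\lambda\big)^{n}\big(i\mu I-\mathbb{B}\big)^{-(n+1)},
\]
which converges whenever $|\lambda-i\mu|\,\|(i\mu I-\mathbb{B})^{-1}\|<1$. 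By hypothesis \eqref{Analyticity}, $\|(i\mu I-\mathbb{B})^{-1}\|\le M/|\mu|$ for $|\mu|$ large, so the series converges in the disc $|\lambda-i\mu|<|\mu|/M$, whose radius grows linearly in $|\mu|$. Taking the union of these discs over large $|\mu|$ together with the right half-plane carves out an open sector of the form $\Sigma_\delta$ around $i\R$ contained in $\rho(\mathbb{B})$.

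It then remains to verify the uniform bound on this sector. Summing the Neumann series termwise gives, for $|\lambda-i\mu|\le |\mu|/(2M)$,
\[
\|(\lambda I-\mathbb{B})^{-1}\|\le \frac{M/|\mu|}{1-M|\lambda-i\mu|/|\mu|}\le \frac{2M}{|\mu|},
\]
and since $|\lambda|$ and $|\mu|$ are comparable on this disc, one concludes $\|\lambda(\lambda I-\mathbb{B})^{-1}\|\le C$ throughout $\Sigma_\delta$, which is exactly the sectorial criterion and hence yields analyticity. The main obstacle I expect is making these transition estimates genuinely uniform: one must check that the constant does not degenerate as $\lambda$ crosses the imaginary axis, that $|\lambda|\asymp|\mu|$ holds uniformly on each disc, and that the family of discs glues into a true open sector of a \emph{fixed} half-angle $\tfrac{\pi}{2}+\delta$ rather than a region pinching toward $i\R$ at infinity.
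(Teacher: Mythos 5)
The paper offers no proof of this theorem; it is quoted from Liu and Zheng \cite{LiuZ}, and your argument is essentially the standard proof given there: the resolvent bound $\|(i\mu I-\mathbb{B})^{-1}\|\le M/|\mu|$ on the imaginary axis is propagated by the Neumann series to a sector $|\arg\lambda|<\tfrac{\pi}{2}+\delta$ with $\tan\delta\sim 1/(2M)$, the contraction estimate $\|(\lambda I-\mathbb{B})^{-1}\|\le 1/\mathrm{Re}\,\lambda$ covers the rays bounded away from $i\R$ in the right half-plane, and the sectorial characterization of analytic semigroups closes the argument. This is correct; the only point worth tightening is that the characterization you ``recall'' at the outset is the one for \emph{bounded} analytic semigroups, so in the forward direction you should either invoke the shifted version (sector with vertex at some $\omega>0$) and use $|\lambda-\omega|\asymp|\lambda|$ as $|\lambda|\to\infty$ on $i\R$, or note that the hypotheses (contractivity plus $i\R\subset\rho(\mathbb{B})$) reduce to that case.
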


Note that to show the condition \eqref{Analyticity} it is enough to show that: Let $\delta>0$. There exists a constant  $C_\delta > 0$ such that the solutions of the system \eqref{ISplacas-20}-\eqref{ISplacas-35}  for $|\lambda|>\delta$  satisfy the inequality
\begin{equation}\label{EquivAnaliticity}
|\lambda|\|U\|_\mathbb{H}^2\leq C_\delta\|F\|_\mathbb{H}\|U\|_\mathbb{H}\qquad \Longleftrightarrow\qquad|\lambda|\|U\|_\mathbb{H}\leq C_\delta\|F\|_\mathbb{H} .
\end{equation}

 First, note that if $\lambda\in\rho(\mathbb{B})\subset \R$ and $F=(f, g, h)\in \mathbb{H}$ then the solution $U=(u,w,\theta)\in\hbox{D}(\mathbb{B
 })$ of the stationary system $(i\lambda I- \mathbb{B
 })U=F$ can be written by
\beqaa
i\lambda u-v &=& f\quad {\rm in}\quad D(A)\label{Pesp-10}\\
i\lambda( I+\gamma A^\tau) v+ A^2 u-\alpha A\theta &=&(I+\gamma A^\tau) g\quad {\rm in}\quad D(A^{-\frac{\tau}{2}})\label{Pesp-20}\\
i\lambda\theta+\kappa A\theta+\beta Av&=& h \quad{\rm in}\quad D(A^0)\label{Pesp-30}
\eeqaa
we have to
\begin{equation}\label{Pdis-10}
\kappa\alpha\|A^\frac{1}{2}\theta\|^2=\text{Re}\dual{(i\lambda -\mathbb{B})U}{U}=\text{Re}\dual{F}{U}\leq \|F\|_\mathbb{H}\|U\|_\mathbb{H}.
\end{equation}

Next we will show two lemmas that will be fundamental to achieve our results.
\begin{lemma}\label{Lemma001Exp}
Let $\delta > 0$. There exists $C_\delta > 0$ such that the solutions of the system \eqref{ISplacas-20}-\eqref{ISplacas-35} for $|\lambda|>0$,  satisfy
\begin{equation}\label{Exp000}
    \limsup\limits_{|\lambda|\to
   \infty}
   \|(i\lambda I-\mathbb{B})^{-1}\|_{\mathcal{L}(\mathbb{H})}<\infty\qquad{\rm for}\qquad 0\leq\tau\leq 1.
\end{equation}
\end{lemma}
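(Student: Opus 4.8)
The plan is to establish the uniform resolvent bound, which is exactly the Gearhart--Pr\"uss (Huang) condition guaranteeing exponential stability of the contraction semigroup $S(t)$. Thus I fix $F=(f,g,h)\in\mathbb{H}$, let $U=(u,v,\theta)\in D(\mathbb{B})$ solve the stationary system $(i\lambda I-\mathbb{B})U=F$, i.e. \eqref{Pesp-10}--\eqref{Pesp-30}, and aim to prove $\|U\|_{\mathbb{H}}\le C\|F\|_{\mathbb{H}}$ with $C$ independent of $\lambda$ for all $|\lambda|$ large; the remaining bounded range of $\lambda$ is covered once $i\R\subseteq\rho(\mathbb{B})$ is known, by continuity of the resolvent. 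The backbone of the argument is to control the three pieces of $\|U\|_{\mathbb{H}}^2=\beta\|Au\|^2+\beta\|v\|_{D(A^{\tau/2})}^2+\alpha\|\theta\|^2$ one at a time, starting from the dissipation identity \eqref{Pdis-10}, which already yields $\kappa\alpha\|A^{1/2}\theta\|^2\le\|F\|_{\mathbb{H}}\|U\|_{\mathbb{H}}$ and hence full control of $\theta$ in $D(A^{1/2})$.

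First I would handle the equipartition of the elastic and kinetic energies. Testing the plate equation \eqref{Pesp-20} with $u$ and using \eqref{Pesp-10} in the form $i\lambda u=v+f$, the inertial term $i\lambda\langle(I+\gamma A^\tau)v,u\rangle$ collapses to $-\|v\|_{D(A^{\tau/2})}^2$ modulo a data term, giving an identity of the shape $\|Au\|^2-\|v\|_{D(A^{\tau/2})}^2=\alpha\langle A^{1/2}\theta,A^{1/2}u\rangle+R_1$, where $R_1$ collects inner products of $F$ against $U$. The decisive point is that the coupling term here requires only $\theta\in D(A^{1/2})$, which the dissipation supplies; by Cauchy--Schwarz, $\|A^{1/2}u\|\le C\|Au\|$ and Young's inequality it is absorbed as $\varepsilon\|Au\|^2+C_\varepsilon\|A^{1/2}\theta\|^2$. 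This reduces the control of the displacement energy $\|Au\|^2$ to the control of the velocity $\|v\|_{D(A^{\tau/2})}^2$.

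The velocity estimate is the crux. I would exploit the heat equation \eqref{Pesp-30}, written as $\beta Av=h-i\lambda\theta-\kappa A\theta$, to express $v$ through $\theta$ and the data; the obstruction is the factor $i\lambda\theta$, since the dissipation controls only $\|A^{1/2}\theta\|$ and not $|\lambda|\,\|\theta\|$. The resolution is to feed \eqref{Pesp-10} back in so that the dangerous power of $\lambda$ produced by $i\lambda\theta$ is matched against the one produced by the inertial term of the plate equation, and then to interpolate $\theta$ between $D(A^{1/2})$ and the bound on $A\theta$ coming from \eqref{Pesp-30} itself; the net effect is that the apparent $|\lambda|$-growth is beaten by the accompanying $|\lambda|^{-1}$ factors, so that $\|v\|_{D(A^{\tau/2})}^2\le C\|A^{1/2}\theta\|\,\|U\|_{\mathbb{H}}+C\|F\|_{\mathbb{H}}\|U\|_{\mathbb{H}}+\varepsilon\|U\|_{\mathbb{H}}^2$. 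Combining this with the equipartition identity and the $\theta$-bound and absorbing the $\varepsilon$-terms yields $\|U\|_{\mathbb{H}}^2\le C\|F\|_{\mathbb{H}}\|U\|_{\mathbb{H}}$, i.e. \eqref{Exp000}.

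The main obstacle, as indicated, is precisely this uniform-in-$\lambda$ control of the velocity, and making it work simultaneously for every $\tau\in[0,1]$: the fractional order $A^{\tau/2}$ in the kinetic norm interacts with the parabolic smoothing of the heat component, and the borderline behaviour occurs near the resonance $\lambda^2(1+\gamma\mu_k^\tau)\approx\mu_k^2$ (with $\mu_k$ the eigenvalues of $A$), where the real part of the characteristic symbol nearly vanishes and only the thermal damping survives. A clean way to verify that the damping indeed dominates there is to diagonalise along the eigenbasis of $A$, reduce \eqref{Pesp-10}--\eqref{Pesp-30} to a scalar $3\times3$ system per mode, and bound from below the characteristic determinant $D(\lambda,\mu_k)=\mu_k^2-\lambda^2(1+\gamma\mu_k^\tau)+\tfrac{i\lambda\alpha\beta\mu_k^2}{i\lambda+\kappa\mu_k}$ uniformly in $k$; its imaginary part $\tfrac{\alpha\beta\kappa\mu_k^3\lambda}{\kappa^2\mu_k^2+\lambda^2}$ provides the quantitative coercivity needed to close the bound for all $\tau\in[0,1]$. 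Finally, the absence of imaginary eigenvalues (hence $i\R\subseteq\rho(\mathbb{B})$) follows from the same estimates by setting $F=0$.
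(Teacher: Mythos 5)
Your proposal is correct and follows essentially the same route as the paper: the dissipation identity controls $\theta$, testing the plate equation against $u$ gives the equipartition identity, and the velocity is controlled by pairing the heat equation with a multiple of $v$ (the paper uses $A^{-1}(I+\gamma A^{\tau})v$) and substituting the plate equation for $i\lambda(I+\gamma A^{\tau})v$, which cancels the dangerous factor of $\lambda$ exactly as you describe. The extra devices you invoke at the end (interpolating $\theta$ against the $A\theta$ bound, and the modal lower bound on the characteristic determinant) are not needed: after the substitution every remaining $\theta$-term is already controlled by $\|A^{1/2}\theta\|$ from the dissipation, and the estimate closes by Cauchy--Schwarz and Young alone.
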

\begin{proof}
To show the \eqref{Exp000},  it suffices to show that,  given $\delta>0$  there exists a constant $C_\delta > 0$ such that the solutions of the system \eqref{ISplacas-20}-\eqref{ISplacas-35} for $|\lambda|>\delta$  satisfy the inequality
\begin{equation}\label{EquivExp}
\|(i\lambda I-\mathbb{B})^{-1}F\|_\mathbb{H}^2=\|U\|^2_\mathbb{H}=\beta\|Au\|^2+\beta\|v\|_{D(A^\frac{\tau}{2})}^2+\alpha\|\theta\|^2 \leq C_\delta\|F\|_\mathbb{H}\|U\|_\mathbb{H}.
\end{equation}
As $0\leq\frac{1}{2}$, applying continuous immersions and the estimate \eqref{Pdis-10} we will have $\alpha\|\theta\|^2\leq C_\delta\|F\|_\mathbb {H}\|U\|_\mathbb{H}$,  therefore it remains to show that
 $$\beta[\|Au\|^2+\|v\|^2_{D(A^\frac{\tau}{2})}]\leq C_\delta\|F\|_\mathbb{H}\|U\|_\mathbb{H}. $$
Taking the duality product between  equation\eqref{Pesp-20} and $\beta u$ and using the equation \eqref{Pesp-10},  taking advantage 
of the self-adjointness of the powers of the operator $A$, we obtain
\begin{multline}\label{Exp001}
\beta\|Au\|^2= \beta\|v\|^2_{D(A^\frac{\tau}{2})}+\beta\gamma\dual{A^\frac{\tau}{2}v}{A^\frac{\tau}{2}f}\\
+\beta\dual{v}{f}+\alpha\beta\dual{\theta}{Au}+\beta\dual{g}{u}+\beta\gamma\dual{A^\frac{\tau}{2} g}{A^\frac{\tau}{2} u}.
\end{multline}
On the other hand, taking the duality product between  equation\eqref{Pesp-30} and $A^{-1}(I+\gamma A^\tau)v$,  using \eqref{Pesp-20} and  taking advantage 
of the self-adjointness of the powers of the operator $A$, we obtain
\begin{eqnarray*}
\beta\|v\|^2_{D(A^\frac{\tau}{2})} &= &\dual{A^{-1}\theta}{i\lambda(I+\gamma A^\tau)v}-\kappa\dual{\theta}{(I+\gamma A^\tau)v}+\dual{A^{-1}h}{v}+\gamma\dual{h}{A^{\tau-1}v}\\
&=&-\dual{\theta}{Au}+\alpha\|\theta\|^2+\dual{A^{-1}\theta}{g}+\gamma\dual{\theta}{A^{\tau-1}g}\\
& &-\kappa\dual{A^\frac{1}{2}\theta}{A^{-\frac{1}{2}}(I+\gamma A^\tau)v}+\dual{A^{-1}h}{v}+\gamma\dual{h}{A^{\tau-1}v}
\end{eqnarray*}
Applying Cauchy-Schwarz and Young's inequalities,   for $\varepsilon>0$ exists $C_\varepsilon>0$ such that
\begin{multline}\label{Exp002}
\beta\|v\|^2_{D(A^\frac{\tau}{2})} \leq  C_\varepsilon\|\theta\|^2+\varepsilon\|Au\|^2+C_\delta\|A^{-1}\theta\|\|g\|+\gamma\|\theta\|\|A^{\tau-1}g\|+C_\varepsilon\|A^\frac{1}{2}\theta\|^2\\+\varepsilon[\|A^{-\frac{1}{2}}v\|^2+\|A^{ \tau-\frac{1}{2}}v\|^2]+
\|A^{-1}h\|\|v\|+\gamma\|h\|\|A^{\tau-1}v\|,
\end{multline}
then, as $-\frac{1}{2}\leq \tau-\frac{1}{2}\leq\frac{\tau}{2}$,  the continuous
embedding $D(A^{\theta_2})\hookrightarrow D(A^{\theta_1})$, $\theta_2>\theta_1$, from norms $\|F\|_\mathbb{H}$ and $\|U\|_\mathbb{H}$ and estimative \eqref{Pdis-10},  we have
\begin{equation}\label{Exp003}
\beta\|v\|^2_{D(A^\frac{\tau}{2})} \leq \varepsilon\|Au\|^2+C_\delta\|F\|_\mathbb{H}\|U\|_\mathbb{H}.
\end{equation}
Using \eqref{Exp003} in \eqref{Exp001},    applying Cauchy-Schwarz and Young's inequalities  and  from norms $\|F\|_\mathbb{H}$ and $\|U\|_\mathbb{H}$,   we obtain
\begin{equation}\label{Exp004}
\beta\|Au\|^2\leq C_\delta\|F\|_\mathbb{H}\|U\|_\mathbb{H}.
\end{equation}
Now,  using  \eqref{Exp004} in \eqref{Exp003}, we have
\begin{equation}\label{Exp005}
\beta\|v\|^2_{D(A^\frac{\tau}{2})} \leq C_\delta\|F\|_\mathbb{H}\|U\|_\mathbb{H}.
\end{equation}
Therefore, from the estimates \eqref{Exp004}, \eqref{Exp005} and \eqref{Pdis-10},  we conclude the proof of \eqref{EquivExp},  thus finishing the proof of this lemma.

\end{proof}

\begin{lemma}\label{Lemma002Exp}
Let $\delta > 0$. There exists $C_\delta > 0$ such that the solutions of the system \eqref{ISplacas-20}-\eqref{ISplacas-35}  for $|\lambda|>\delta$,  satisfy
\begin{equation}\label{Exp006}
|\lambda|[\beta\|Au\|^2+\alpha\|\theta\|^2] \leq \beta|\lambda|\|v\|^2_{D(A^\frac{\tau}{2})}+C_\delta\|F\|_\mathbb{H}\|U\|_\mathbb{H}.
\end{equation}
\end{lemma}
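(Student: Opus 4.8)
The plan is to generate the weight $|\lambda|$ on the two ``potential'' terms $\beta\|Au\|^2$ and $\alpha\|\theta\|^2$ by pairing the resolvent equations against multipliers that already carry the factor $i\lambda$, and then to read the estimate off from the imaginary part of the resulting identities. First I would take the duality product of \eqref{Pesp-20} with $\beta v$ in $D(A^{-\frac{\tau}{2}})\times D(A^{\frac{\tau}{2}})$. By the definition \eqref{EqPiDual} the leading term produces $i\lambda\beta\|v\|^2_{D(A^{\frac{\tau}{2}})}$, while in $\beta\dual{A^2u}{v}=\beta\dual{Au}{Av}$ I would substitute $Av=i\lambda Au-Af$, obtained by applying $A$ to \eqref{Pesp-10}; this yields $-i\lambda\beta\|Au\|^2$ plus the harmless remainder $-\beta\dual{Au}{Af}$. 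Taking imaginary parts gives
\begin{equation*}
\lambda\beta\|Au\|^2=\lambda\beta\|v\|^2_{D(A^{\frac{\tau}{2}})}-\alpha\beta\,\mathrm{Im}\dual{A^{\frac12}\theta}{A^{\frac12}v}-\mathrm{Im}\big[\beta\dual{Au}{Af}+\beta\dual{g}{v}+\beta\gamma\dual{A^{\frac{\tau}{2}}g}{A^{\frac{\tau}{2}}v}\big].
\end{equation*}

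Next I would take the inner product of \eqref{Pesp-30} with $\alpha\theta$ in $D(A^0)$; since $\kappa\alpha\|A^{\frac12}\theta\|^2$ is real it drops out under $\mathrm{Im}$, leaving $\lambda\alpha\|\theta\|^2=\alpha\,\mathrm{Im}\dual{h}{\theta}-\alpha\beta\,\mathrm{Im}\dual{A^{\frac12}v}{A^{\frac12}\theta}$. The decisive step is then to add the two identities. The coupling contributions $-\alpha\beta\,\mathrm{Im}\dual{A^{\frac12}\theta}{A^{\frac12}v}$ and $-\alpha\beta\,\mathrm{Im}\dual{A^{\frac12}v}{A^{\frac12}\theta}$ are complex conjugates of one another, so their sum is purely real and cancels under the imaginary part. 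I expect this cancellation to be the crux of the argument: taken on its own, $\dual{A^{\frac12}v}{A^{\frac12}\theta}$ cannot be absorbed into $C_\delta\|F\|_\mathbb{H}\|U\|_\mathbb{H}$, because for $\tau<1$ the quantity $\|A^{\frac12}v\|$ is not controlled by $\|v\|_{D(A^{\frac{\tau}{2}})}$ (here $\frac12>\frac{\tau}{2}$), and the dissipation bound \eqref{Pdis-10} only delivers $\|A^{\frac12}\theta\|=O\big((\|F\|_\mathbb{H}\|U\|_\mathbb{H})^{1/2}\big)$, which is too weak to compensate. It is exactly the antisymmetry of the thermoelastic coupling that removes this obstruction.

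What remains is the real identity $\lambda[\beta\|Au\|^2+\alpha\|\theta\|^2]=\lambda\beta\|v\|^2_{D(A^{\frac{\tau}{2}})}+\mathcal{R}$, where $\mathcal{R}$ collects only $\dual{Au}{Af}$, $\dual{g}{v}$, $\dual{A^{\frac{\tau}{2}}g}{A^{\frac{\tau}{2}}v}$ and $\dual{h}{\theta}$. Each of these I would bound by Cauchy--Schwarz and the definition of the norms in \eqref{NORM}: for example $\beta|\dual{Au}{Af}|\le\beta\|Au\|\,\|Af\|\le C\|U\|_\mathbb{H}\|F\|_\mathbb{H}$, and likewise $\beta|\dual{g}{v}|$, $\beta\gamma|\dual{A^{\frac{\tau}{2}}g}{A^{\frac{\tau}{2}}v}|$ and $\alpha|\dual{h}{\theta}|$ are all $\le C\|F\|_\mathbb{H}\|U\|_\mathbb{H}$, using that $f\in D(A)$, $g\in D(A^{\frac{\tau}{2}})$ and $h\in D(A^0)$ are the components of $F$; hence $|\mathcal{R}|\le C_\delta\|F\|_\mathbb{H}\|U\|_\mathbb{H}$. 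Since this is an equality between real numbers and the two bracketed quantities are nonnegative, taking absolute values yields $|\lambda|[\beta\|Au\|^2+\alpha\|\theta\|^2]\le|\lambda|\beta\|v\|^2_{D(A^{\frac{\tau}{2}})}+C_\delta\|F\|_\mathbb{H}\|U\|_\mathbb{H}$, which is \eqref{Exp006}. Notably no restriction on $\tau\in[0,1]$ is needed, since the only $\tau$-sensitive term was cancelled rather than estimated.
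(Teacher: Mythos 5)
Your proof is correct and follows essentially the same strategy as the paper: pair the resolvent equations \eqref{Pesp-20} and \eqref{Pesp-30} with multipliers carrying the factor $\lambda$ and exploit the antisymmetry of the thermoelastic coupling, so that $\dual{A^{\frac12}\theta}{A^{\frac12}v}$ and its conjugate cancel and only terms bounded by $C_\delta\|F\|_\mathbb{H}\|U\|_\mathbb{H}$ survive. Your choice of $\alpha\theta$ as the multiplier for \eqref{Pesp-30} together with taking imaginary parts is a somewhat more streamlined route than the paper's multiplier $\frac{\alpha}{\kappa}A^{-1}(\lambda\theta)$ followed by re-substitution of the equation and taking real parts, but the underlying cancellation mechanism is identical.
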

\begin{proof}
{\bf Estimative of $\beta|\lambda|\|Au\|^2$}

Taking the duality product between  equation\eqref{Pesp-20} and $\beta\lambda u$ and using the equation \eqref{Pesp-10},  taking advantage 
of the self-adjointness of the powers of the operator $A$, we obtain
\begin{eqnarray}
\nonumber
\beta\lambda\|Au\|^2 & =& \beta\lambda\|v\|^2_{D(A^\frac{\tau}{2})}+\beta\dual{\lambda(I+\gamma A^\tau)v}{f}
+\alpha\beta\dual{\theta}{A(-iv-if)}\\
\nonumber
& & +\beta\dual{(I+\gamma A^\tau)g}{-if-iv}\\
\nonumber
&= & \beta\lambda\|v\|^2_{D(A^\frac{\tau}{2})}+i\beta\dual{Au}{Af}-i\beta\dual{\theta}{f}-i\beta\dual{(I+\gamma A^\tau)g}{f}\\
\nonumber
& & +i\alpha\beta\dual{\theta}{Av} +i\alpha\beta\dual{\theta}{f}
 +i\beta\dual{(I+\gamma A^\tau)g}{f}+i\beta\dual{(I+\gamma A^\tau)g}{v}\\\label{Exp007}
& = & \beta\lambda\|v\|^2_{D(A^\frac{\tau}{2})}+i\beta\dual{Au}{Af}-i\beta\dual{\theta}{f}+i\alpha\beta\dual{\theta}{Av}+i\alpha\beta\dual{\theta}{f}\\
\nonumber
& & +i\beta\dual{(I+\gamma A^\tau)g}{v}.
\end{eqnarray}
On the other hand,  taking the duality product between  equation\eqref{Pesp-30} and $\frac{\alpha}{\kappa}A^{-1}(\lambda\theta)$ and using the equation \eqref{Pesp-20},  taking advantage 
of the self-adjointness of the powers of the operator $A$, we obtain
\begin{eqnarray}
\label{Exp007A}
\alpha\lambda\|\theta\|^2=-i\dfrac{\alpha}{\kappa}\lambda^2\|A^{-\frac{1}{2}}\theta\|^2-\dfrac{\alpha\beta}{\kappa}\dual{v}{\lambda\theta}+\dfrac{\alpha}{\kappa}\dual{h}{A^{-1}(\lambda\theta)},
\end{eqnarray}
from:
\begin{eqnarray}
\label{Exp007B}
-\dfrac{\alpha\beta}{\kappa}\dual{v}{\lambda\theta} &= &i\alpha\beta\dual{Av}{\theta}+i\dfrac{\alpha\beta^2}{\kappa}\|A^\frac{1}{2}v\|^2+i\dfrac{\alpha\beta}{\kappa}\dual{v}{h}\\
\label{Exp007C}
\dfrac{\alpha}{\kappa}\dual{h}{A^{-1}(\lambda\theta} & = & -i\alpha\dual{h}{\theta}-i\dfrac{\alpha\beta}{\kappa}\dual{h}{v}+i\dfrac{\alpha^2}{\kappa}\|A^{-\frac{1}{2}}h\|^2,
\end{eqnarray}
Adding \eqref{Exp007} with \eqref{Exp007A} and in the result using the identities \eqref{Exp007B} and \eqref{Exp007C}, we get
\begin{multline}\label{Exp007D}
\lambda[\beta\|Au\|^2+\alpha\|\theta\|^2] =\beta\lambda\|v\|^2_{D(A^\frac{\tau}{2})}+i\beta\dual{Au}{Af}\\
-i\beta\dual{\theta}{f}+i\alpha\beta\dual{\theta}{Av}+i\alpha\beta\dual{\theta}{f}
 +i\beta\dual{(I+\gamma A^\tau)g}{v}\\-i\dfrac{\alpha}{\kappa}\lambda^2\|A^{-\frac{1}{2}}\theta\|^2
+i\alpha\beta\dual{Av}{\theta}+i\dfrac{\alpha\beta^2}{\kappa}\|A^\frac{1}{2}v\|^2+i\dfrac{\alpha\beta}{\kappa}\dual{v}{h}\\
-i\alpha\dual{h}{\theta}-i\dfrac{\alpha\beta}{\kappa}\dual{h}{v}+i\dfrac{\alpha^2}{\kappa}\|A^{-\frac{1}{2}}h\|^2.
\end{multline}
Of identity  $i\alpha\beta[\dual{\theta}{Av}+\dual{Av}{\theta}]=i2\alpha\beta{\rm Re}\dual{\theta}{Av}$, taking real part of \eqref{Exp007D},  we have
\begin{multline}\label{Exp007E}
\lambda[\beta\|Au\|^2+\alpha\|\theta\|^2] =\beta\lambda\|v\|^2_{D(A^\frac{\tau}{2})}+i\beta\dual{Au}{Af}\\
-i\beta\dual{\theta}{f}+i\alpha\beta\dual{\theta}{f}
 +i\beta\dual{(I+\gamma A^\tau)g}{v}\\
+i\dfrac{\alpha\beta}{\kappa}\dual{v}{h}
-i\alpha\dual{h}{\theta}-i\dfrac{\alpha\beta}{\kappa}\dual{h}{v}.
\end{multline}
Applying Cauchy-Schwarz and Young inequalities and norms $\|F\|_\mathbb{H}$ and $\|U\|_\mathbb{H}$,  we finish proving this lemma.

\end{proof}
\begin{lemma}\label{Lemma8}
Let $\delta>0$.  There exists $C_\delta>0$ such that the solutions of the system \eqref{ISplacas-20}-\eqref{ISplacas-35}  for $|\lambda| > \delta$,  satisfy
\begin{equation}
\|A^\frac{1}{2}v\|^2\leq C_\delta\|F\|_\mathbb{H}\|U\|_\mathbb{H}\qquad{\rm for}\qquad  \dfrac{1}{2}\leq\tau\leq 1.
\end{equation}
\end{lemma}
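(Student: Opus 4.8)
The plan is to pull $\|A^{1/2}v\|^2$ out of the heat equation \eqref{Pesp-30} and then fight the one genuinely dangerous term with the elasticity equation \eqref{Pesp-20}. First I would take the duality product of \eqref{Pesp-30} with $v$. Since $\dual{Av}{v}=\|A^{1/2}v\|^2$ is real and nonnegative, self-adjointness of the powers of $A$ gives
\[
\beta\|A^{1/2}v\|^2=\text{Re}\,\dual{h}{v}-\kappa\,\text{Re}\,\dual{A^{1/2}\theta}{A^{1/2}v}+\lambda\,\text{Im}\,\dual{\theta}{v}.
\]
The first term is $\le\|h\|\,\|v\|\le C\|F\|_{\mathbb H}\|U\|_{\mathbb H}$; the second I would split by Young's inequality as $C_\varepsilon\|A^{1/2}\theta\|^2+\varepsilon\|A^{1/2}v\|^2$, where $\|A^{1/2}\theta\|^2\le C\|F\|_{\mathbb H}\|U\|_{\mathbb H}$ by the dissipation identity \eqref{Pdis-10} and the $\varepsilon$-contribution is absorbed on the left. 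Thus the whole lemma reduces to estimating the single critical term $\lambda\,\text{Im}\dual{\theta}{v}$.

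The conceptual point, and the reason a new argument is needed, is that equation \eqref{Pesp-30} on its own only ties $\|A^{1/2}v\|^2$ to $\lambda^2\|A^{-1/2}\theta\|^2$: applying $A^{-1/2}$ to \eqref{Pesp-30} yields $i\lambda A^{-1/2}\theta=A^{-1/2}h-\kappa A^{1/2}\theta-\beta A^{1/2}v$, so these two quantities are essentially interchangeable and no information is gained by manipulating \eqref{Pesp-30} alone. To break this circularity I would invoke the plate dynamics, testing \eqref{Pesp-20} against $\theta$ in the $D(A^{-\tau/2})\times D(A^{\tau/2})$ duality (legitimate since $\theta\in D(A^{1/2})\hookrightarrow D(A^{\tau/2})$). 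Taking the real part and using the elementary identity $\text{Re}[i\lambda\dual{v}{\theta}]=\lambda\,\text{Im}\dual{\theta}{v}$, this expresses the critical term as
\[
\lambda\,\text{Im}\dual{\theta}{v}=\alpha\|A^{1/2}\theta\|^2-\text{Re}\,\dual{A^2u}{\theta}-\gamma\,\text{Re}\big[i\lambda\dual{A^{\tau/2}v}{A^{\tau/2}\theta}\big]+\text{Re}\,\dual{(I+\gamma A^\tau)g}{\theta},
\]
where the first term is again harmless by \eqref{Pdis-10}.

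It then remains to close the bending term, the inertial coupling, and the data term. The bending term I would rewrite via self-adjointness as $\dual{Au}{A\theta}$ and substitute $\kappa A\theta=h-i\lambda\theta-\beta Av$ from \eqref{Pesp-30}, thereby trading the inadmissible power $A^{3/2}u$ for $v$-quantities together with a term $\lambda\dual{Au}{\theta}$ that is reabsorbed using \eqref{Pesp-10} and the $|\lambda|$-weighted estimate of Lemma~\ref{Lemma002Exp}; throughout, the ubiquitous factor $\theta$ is kept small by $\|\theta\|\le C\|U\|_{\mathbb H}$ and $\|A^{1/2}\theta\|^2\le C\|F\|_{\mathbb H}\|U\|_{\mathbb H}$, and products are collapsed back to size $\|F\|_{\mathbb H}\|U\|_{\mathbb H}$ by the uniform resolvent bound $\|U\|_{\mathbb H}\le C\|F\|_{\mathbb H}$ of Lemma~\ref{Lemma001Exp}. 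The coupling and data terms I would treat by Cauchy--Schwarz after interpolating $\|A^{\tau/2}\theta\|$ between $\|\theta\|$ and $\|A^{1/2}\theta\|$ and using the embeddings $D(A^{\tau/2})\hookrightarrow D(A^{\tau-1/2})$; this is exactly where the restriction $\tfrac12\le\tau\le1$ is forced, since it is what places $\tau-\tfrac12$ and $\tau/2$ in the admissible range and lets the parabolic smoothing $\kappa A\theta$ absorb the extra power of $\lambda$. The hard part, which I expect to be the heart of the matter, is the simultaneous balancing of $\gamma\lambda\dual{A^{\tau/2}v}{A^{\tau/2}\theta}$ and $\text{Re}\dual{A^2u}{\theta}$ against the inertial data contributions: both carry a full power of $\lambda$ that must be defeated by the gain in $\theta$, and checking that this closes precisely on $[\tfrac12,1)$ — and not on a larger interval — is the delicate bookkeeping on which the lemma rests.
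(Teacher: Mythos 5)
Your reduction of the lemma to the single term $\lambda\,\mathrm{Im}\dual{\theta}{v}$ is correct, and so is your diagnosis that equation \eqref{Pesp-30} alone is circular and that the plate equation must be brought in. But the multiplier you choose for \eqref{Pesp-20} --- plain $\theta$ --- does not close the argument. It leaves you with $\gamma\lambda\,\mathrm{Im}\dual{A^{\tau/2}v}{A^{\tau/2}\theta}$ and, after your substitution $\kappa A\theta=h-i\lambda\theta-\beta Av$ in the bending term, with $\lambda\,\mathrm{Im}\dual{Au}{\theta}$; each carries a full uncancelled power of $\lambda$ attached to quantities ($A^{\tau-1/2}v$, $Au$, $\theta$) for which no $\lambda$-weighted bound of size $\|F\|_\mathbb{H}\|U\|_\mathbb{H}$ is available. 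Invoking Lemma \ref{Lemma002Exp} here does not help: its right-hand side still contains $|\lambda|\|v\|^2_{D(A^{\tau/2})}$, which is exactly the quantity the Gevrey theorem is trying to estimate and which is \emph{not} $O(\|F\|_\mathbb{H}\|U\|_\mathbb{H})$ (the final theorem only proves $|\lambda|\|A^{\tau/2}v\|^2\leq C|\lambda|^{\frac{1+\tau}{3-\tau}}\|F\|_\mathbb{H}\|U\|_\mathbb{H}$). Since Lemma \ref{Lemma8} is used downstream precisely to get a $\lambda$-uniform bound on $\|A^{1/2}v_2\|$, any residual positive power of $\lambda$ is fatal. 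You flag this balancing act yourself as ``the heart of the matter'' and leave it open, so the proof is not complete.

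The paper's proof sidesteps the difficulty by a different choice of multiplier: it tests the heat equation \eqref{Pesp-30} against $A^{-\tau}(I+\gamma A^{\tau})v$ rather than against $v$. The weight $A^{-\tau}(I+\gamma A^{\tau})$ is matched to the inertia operator, so the dangerous term appears as the single block $\dual{A^{-\tau}\theta}{i\lambda(I+\gamma A^{\tau})v}$, which is replaced wholesale by $\dual{A^{-\tau}\theta}{-A^{2}u+\alpha A\theta+(I+\gamma A^{\tau})g}$ using \eqref{Pesp-20}; no free factor of $\lambda$ and no rogue $\gamma A^{\tau}v$ term survive. The left-hand side becomes $\beta\|A^{\frac{1-\tau}{2}}v\|^{2}+\beta\gamma\|A^{\frac12}v\|^{2}$, and the only delicate remaining term is $\dual{A^{1/2}\theta}{A^{\frac32-\tau}u}$, which is bounded by $\|A^{1/2}\theta\|\,\|Au\|$ exactly when $\frac32-\tau\leq 1$, i.e.\ $\tau\geq\frac12$ --- this, not your interpolation of $\|A^{\tau/2}\theta\|$, is where the restriction on $\tau$ actually enters. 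If you want to rescue your own scheme, the repair is to change your test function from $\theta$ to $A^{-\tau}\theta$ (equivalently, adopt the paper's multiplier on the heat equation), after which the rest of your estimates go through with \eqref{Pdis-10} and Lemma \ref{Lemma001Exp}.
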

\begin{proof}
Performing the product of duality between equation \eqref{Pesp-30} and $A^{-\tau}(I + \gamma A^\tau )v$ and again using the property that for  all $\eta\in\mathbb{R} $, $A^\eta$ is self-adjoint, we have
\begin{eqnarray}
\nonumber
\beta\dual{Av}{A^{-\tau}(I+\gamma A^\tau)v} &= & \dual{A^{-\tau}\theta}{i\lambda (I+\gamma A^\tau)v}-\kappa\dual{A^\frac{1}{2}\theta}{A^{\frac{1}{2}-\tau}(I+\gamma A^\tau)v}\\
\label{Exp008A}
& & +\dual{h}{A^{-\tau}(I+\gamma A^\tau)v},
\end{eqnarray}
then, using \eqref{Pesp-20}  in \eqref{Exp008A},  we have
\begin{eqnarray}
\nonumber
\beta\|A^\frac{1-\tau}{2}v\|^2+\beta\gamma\|A^\frac{1}{2}v\|^2\hspace*{-0.3cm} &= &\hspace*{-0.3cm} 
\dual{A^{-\tau}\theta}{-A^2u+\alpha A\theta+(I+\gamma A^\tau)g}
-\kappa\dual{A^\frac{1}{2}\theta}{A^{\frac{1}{2}-\tau}v}\\
\label{Exp008B}
& & -\kappa\gamma\dual{A^\frac{1}{2}\theta}{A^\frac{1}{2}v}+\dual{h}{A^{-\tau}v}+\gamma\dual{h}{v}.
\end{eqnarray}
Considering continuous immersion
$D(A^{\theta_2})\hookrightarrow  D(A^{\theta_1}), \;\theta_2 >\theta_1$ 
for   $\frac{1-\tau}{2}\leq\frac{1}{2}$  and Cauchy-Schwarz inequality, we have
\begin{eqnarray}
\nonumber
\hspace*{-0.5cm}\beta\gamma\|A^\frac{1}{2}v\|^2 &\leq& |-\dual{A^\frac{1}{2}\theta}{A^{\frac{3}{2}-\tau}u}|+\alpha\|A^\frac{1-\tau}{2}\theta\|^2+|\dual{\theta}{A^{-\tau}g}|+|\gamma\dual{\theta}{g}|\\
\label{Exp008C}
& & |\kappa\dual{A^\frac{1}{2}\theta}{A^{\frac{1}{2}-\tau}v}|+|-\kappa\dual{A^\frac{1}{2}\theta}{A^\frac{1}{2}v}|+|\dual{h}{A^{-\tau}v}|+\gamma|\dual{h}{v}|.
\end{eqnarray}
Considering $\frac{3}{2}-\tau\leq 1\Leftrightarrow \frac{1}{2}\leq \tau\leq 1$, for $\varepsilon>0$ exists $C_\varepsilon>0$ independent de $\lambda$, and applying Young inequality, we have
\begin{multline}
\beta\gamma\|A^\frac{1}{2}v\|^2\leq C\{ \|A^\frac{1}{2}\theta\|^2+\|AU\|^2+\|\theta\|\|g\|+\|h\|\|A^{-\tau}v\|+\|h\|\|v\|\}\\
+\varepsilon\|A^\frac{1}{2}v\|^2+C_\varepsilon\|A^\frac{1}{2}\theta\|^2
\end{multline}
Finally using estimative \eqref{Pdis-10},  norms $\|F\|_\mathbb{H}$ and $\|U\|_\mathbb{H}$   and  applying Lemma\eqref{Lemma001Exp}. The proof of this lemma is finished.

\end{proof}
\begin{lemma}\label{EImaginary}
\label{iR}Let $\varrho(\mathbb{B})$ be the resolvent set of operator
$\mathbb{B}$. Then
\begin{equation}
i\hspace{0.5pt}\mathbb{R}\subset\varrho(\mathbb{B}).
\end{equation}
\end{lemma}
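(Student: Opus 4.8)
The plan is to exploit the fact that $\mathbb{B}$ has compact resolvent, so that ruling out purely imaginary \emph{eigenvalues} already suffices, and the elimination of such eigenvalues will follow from dissipativity alone.

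First I would record that, since $0\in\rho(\mathbb{B})$ was established in the well-posedness analysis, the inverse $\mathbb{B}^{-1}$ is well defined, and moreover it is compact. Indeed, membership in $\mathcal{D}(\mathbb{B})$ (see \eqref{dominioB}) forces extra fractional regularity in each component, so that $\mathcal{D}(\mathbb{B})$ embeds into $\mathbb{H}=D(A)\times D(A^{\tau/2})\times D(A^0)$ through embeddings of the form $D(A^{\theta_2})\hookrightarrow D(A^{\theta_1})$ with $\theta_2>\theta_1$, which are compact because $A$ has compact inverse. Consequently $\sigma(\mathbb{B})$ is discrete and consists solely of isolated eigenvalues of finite multiplicity, and by the Fredholm alternative one has $i\lambda\in\rho(\mathbb{B})$ as soon as $i\lambda I-\mathbb{B}$ is injective. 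This is the key simplification: it lets me avoid solving the stationary system for a general right-hand side $F$, i.e. it bypasses surjectivity. Since $\mathbb{B}$ is dissipative (cf. \eqref{eqdissipative}), $\sigma(\mathbb{B})\subset\{\mathrm{Re}\,s\le 0\}$, and as $0\in\rho(\mathbb{B})$ the only remaining task is to show that no $i\lambda$ with $\lambda\in\mathbb{R}\setminus\{0\}$ is an eigenvalue.

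Next I would argue by contradiction: suppose $\mathbb{B}U=i\lambda U$ for some $\lambda\in\mathbb{R}\setminus\{0\}$ and some $0\neq U=(u,v,\theta)\in\mathcal{D}(\mathbb{B})$. Taking $F=0$ in the dissipation identity \eqref{Pdis-10} gives $\kappa\alpha\|A^{1/2}\theta\|^2=0$, hence $\theta=0$. Substituting $\theta=0$ and $F=0$ into the stationary system \eqref{Pesp-10}--\eqref{Pesp-30}, equation \eqref{Pesp-30} reduces to $\beta Av=0$, and since $A$ is injective this forces $v=0$; then \eqref{Pesp-10} reads $i\lambda u-v=0$, whence $i\lambda u=0$ and therefore $u=0$ because $\lambda\neq0$. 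Thus $U=0$, contradicting $U\neq0$, so no purely imaginary number is an eigenvalue of $\mathbb{B}$.

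Combining the two steps, $i\lambda I-\mathbb{B}$ is injective for every real $\lambda$, and the compact-resolvent/Fredholm argument upgrades injectivity to $i\lambda\in\rho(\mathbb{B})$, giving $i\mathbb{R}\subset\varrho(\mathbb{B})$. I expect the main technical point to be the rigorous verification that the resolvent is genuinely compact, that is, that the domain constraints in \eqref{dominioB} really do yield a compact embedding $\mathcal{D}(\mathbb{B})\hookrightarrow\mathbb{H}$; the elimination of imaginary eigenvalues is then only a short cascade $\theta=0\Rightarrow v=0\Rightarrow u=0$ resting on dissipativity and the injectivity of $A$.
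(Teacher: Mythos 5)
Your proof is correct, but it takes a genuinely different route from the paper's. The paper makes no use of compactness: it first proves the uniform resolvent bound $\|U\|_{\mathbb{H}}^2\le C_\delta\|F\|_{\mathbb{H}}\|U\|_{\mathbb{H}}$ for all $|\lambda|>\delta$ (Lemma~\ref{Lemma001Exp}), and then argues by contradiction that if $i\mathbb{R}\not\subset\rho(\mathbb{B})$ there would be a boundary spectral point $\pm i\lambda_0\neq 0$ approached by $i\lambda_n$ with unit vectors $U_n\in D(\mathbb{B})$ satisfying $(i\lambda_nI-\mathbb{B})U_n\to 0$; the uniform bound then forces $\|U_n\|_{\mathbb{H}}\to 0$, contradicting $\|U_n\|_{\mathbb{H}}=1$. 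You instead observe that the resolvent is compact, so the spectrum is purely point spectrum and it suffices to exclude imaginary eigenvalues, which you do via the cascade $\theta=0\Rightarrow v=0\Rightarrow u=0$ resting on \eqref{Pdis-10} and the injectivity of $A$; this part of your argument is clean and correct. The trade-off is clear: your route is more elementary in that it needs only dissipativity and the structure of \eqref{Pesp-10}--\eqref{Pesp-30}, not the quantitative estimate of Lemma~\ref{Lemma001Exp}; the paper's route is independent of compactness (hence portable to settings where $A^{-1}$ is not compact) and reuses an estimate it needs anyway for exponential stability and the Gevrey analysis. The one point you should make rigorous, as you yourself flag, is the compact embedding $\mathcal{D}(\mathbb{B})\hookrightarrow\mathbb{H}$: from \eqref{dominioB} one gets $v\in D(A)$ and $\theta\in D(A)$ directly, and then $-Au+\alpha\theta\in D(A^{1-\frac{\tau}{2}})$ together with $\theta\in D(A)\subset D(A^{1-\frac{\tau}{2}})$ gives $u\in D(A^{2-\frac{\tau}{2}})$; since the graph norm of $\mathbb{B}$ controls $\|A^{2-\frac{\tau}{2}}u\|+\|Av\|+\|A\theta\|$ (using that $I+\gamma A^{\tau}$ is an isomorphism of $D(A^{\frac{\tau}{2}})$ onto $D(A^{-\frac{\tau}{2}})$) and $A^{-1}$ is compact, the embedding is indeed compact and your Fredholm step goes through.
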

\begin{proof}
 Let us prove that $i\R\subset\rho(\mathbb{B})$ by using an argument by contradiction, so we suppose that $i\R\not\subset \rho(\mathbb{B})$. 
 As $0\in\rho(\mathbb{B})$ and $\rho(\mathbb{B})$ is open, we consider the highest positive number $\lambda_0$ such that the $]-i\lambda_0,i\lambda_0[\subset\rho(\mathbb{B})$ then $i\lambda_0$ or $-i\lambda_0$ is an element of the spectrum $\sigma(\mathbb{B})$.   
 We Suppose $i\lambda_0\in \sigma(\mathbb{B})$ (if $-i\lambda_0\in \sigma(\mathbb{B})$ the proceeding is similar). Then, for $0<\delta<\lambda_0$ there exist a sequence of real numbers $(\lambda_n)$, with $\delta\leq\lambda_n<\lambda_0$, $\lambda_n\con \lambda_0$, and a vector sequence  $U_n=(u_n,v_n,\theta_n)\in D(\mathbb{B})$ with  unitary norms, such that
\beqae
\|(i\lambda_nI-\mathbb{B}) U_n\|_\mathbb{H}=\|F_n\|_\mathbb{H}\con 0,
\eeqae
as $n\con \infty$.   From  estimative \eqref{EquivExp},  we have 
\begin{equation*}
\|U_n\|^2_\mathbb{H}=\beta\|Au_n\|^2 +\beta\|v_n\|^2_{D(A^\frac{\tau}{2})}+\alpha\|\theta_n\|^2\leq C_\delta\|F_n\|_\mathbb{H}\|U_n\|_\mathbb{H}\con 0.
\end{equation*}
Therefore, we have  $\|U_n\|_\mathbb{H}\con 0$ but this is absurd, since $\|U_n\|_\mathbb{H}=1$ for all $n\in\N$. Thus, $i\R\subset \rho(\mathbb{B})$. This completes the proof this lemma. 

\end{proof}
\subsection{Lack of analyticity of $S(t)=e^{\mathbb{B}t}$}
  
Since the operator $A$ defined on \eqref{op-A} is positive,  self-adjoint and it has compact resolvent, its spectrum is constituted by positive eigenvalues $(\sigma_n)$ such that $\sigma_n\con \infty$ as $n\con \infty$. For $n\in \N$ we denote with $e_n$ an unitary $D(A^\frac{\tau}{2})$-norm eigenvector associated to the eigenvalue $\sigma_n$, that is,
\begin{equation}\label{auto-10}
Ae_n=\sigma_ne_n,\quad \|e_n\|_{D(A^\frac{\tau}{2})}=1,\quad n\in\N.
\end{equation}

\begin{theorem}\label{AnaliticidadePlaca-20}
The semigroup $S(t)=e^{\mathbb{B}t}$  is not analytic for $\tau\in (0,1]$.
\end{theorem}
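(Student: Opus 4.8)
The plan is to use the spectral characterization of analyticity in Theorem \ref{LiuZAnaliticity}. Since $i\R\subset\rho(\mathbb{B})$ by Lemma \ref{EImaginary}, the semigroup $S(t)$ is analytic if and only if $\limsup_{|\lambda|\to\infty}\|\lambda(i\lambda I-\mathbb{B})^{-1}\|_{\mathcal{L}(\mathbb{H})}<\infty$. Hence, to prove the failure of analyticity it is enough to construct real frequencies $\lambda_n\to\infty$ and vectors $U_n\in D(\mathbb{B})$ for which, setting $F_n:=(i\lambda_n I-\mathbb{B})U_n$, the quotient $|\lambda_n|\,\|U_n\|_\mathbb{H}/\|F_n\|_\mathbb{H}$ diverges. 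I would build such a sequence explicitly along the eigenvectors $e_n$ introduced in \eqref{auto-10}.

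First I would restrict the resolvent equations \eqref{Pesp-10}--\eqref{Pesp-30} to the line spanned by $e_n$. Writing $U_n=(a_ne_n,\,b_ne_n,\,c_ne_n)$ and $F_n=(f_1e_n,\,g_1e_n,\,h_1e_n)$ and using $Ae_n=\sigma_ne_n$, the three equations become the scalar relations $f_1=i\lambda_na_n-b_n$, $(1+\gamma\sigma_n^\tau)g_1=i\lambda_n(1+\gamma\sigma_n^\tau)b_n+\sigma_n^2a_n-\alpha\sigma_nc_n$, and $h_1=(i\lambda_n+\kappa\sigma_n)c_n+\beta\sigma_nb_n$. Because $e_n$ is normalized in $D(A^{\tau/2})$ one has $\|e_n\|=\sigma_n^{-\tau/2}$, so that $\|U_n\|_\mathbb{H}^2=\beta|a_n|^2\sigma_n^{2-\tau}+\beta|b_n|^2+\alpha|c_n|^2\sigma_n^{-\tau}$ and, in our construction, $\|F_n\|_\mathbb{H}^2=\beta|g_1|^2$.

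The core idea is to drive the elastic block into exact resonance. I would take $\lambda_n$ so that $\lambda_n^2(1+\gamma\sigma_n^\tau)=\sigma_n^2$, i.e. $\lambda_n=\sigma_n(1+\gamma\sigma_n^\tau)^{-1/2}$, which behaves like $\gamma^{-1/2}\sigma_n^{1-\tau/2}$ and in particular tends to infinity. Choosing $b_n=i\lambda_na_n$ forces $f_1=0$, and choosing $c_n=-\beta\sigma_nb_n/(i\lambda_n+\kappa\sigma_n)$ forces $h_1=0$; the resonance makes the leading terms $-\lambda_n^2(1+\gamma\sigma_n^\tau)a_n+\sigma_n^2a_n$ cancel identically, leaving only the coupling contribution
$$g_1=\frac{i\alpha\beta\sigma_n^2\lambda_na_n}{(1+\gamma\sigma_n^\tau)(i\lambda_n+\kappa\sigma_n)}.$$
Thus the residual $F_n$ is produced solely by the thermoelastic coupling, which is exactly the mechanism that cannot damp the resonant elastic mode uniformly.

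Finally I would compare the magnitudes. Since $\tau>0$ gives $\lambda_n\sim\sigma_n^{1-\tau/2}\ll\sigma_n$, one has $|i\lambda_n+\kappa\sigma_n|\sim\kappa\sigma_n$ and $(1+\gamma\sigma_n^\tau)\sim\gamma\sigma_n^\tau$; a direct count of powers then gives $\|U_n\|_\mathbb{H}^2\sim|a_n|^2\sigma_n^{2-\tau}$ and $\|F_n\|_\mathbb{H}^2\sim|a_n|^2\sigma_n^{4-3\tau}$, whence
$$\frac{\lambda_n^2\|U_n\|_\mathbb{H}^2}{\|F_n\|_\mathbb{H}^2}\sim\frac{\sigma_n^{2-\tau}\cdot\sigma_n^{2-\tau}}{\sigma_n^{4-3\tau}}=\sigma_n^{\tau}\longrightarrow\infty.$$
Taking square roots yields $|\lambda_n|\,\|U_n\|_\mathbb{H}/\|F_n\|_\mathbb{H}\sim\sigma_n^{\tau/2}\to\infty$, which violates \eqref{Analyticity} and proves the claim (note that the quotient is scale invariant in $a_n$, so no normalization is needed). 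The main obstacle is the careful asymptotic bookkeeping: one must verify that the resonance cancellation is exact, that the weak coupling term alone controls $F_n$, and that the surviving powers of $\sigma_n$ combine to the growth $\sigma_n^{\tau/2}$. As a consistency check, the same computation yields exponent $0$ when $\tau=0$, in agreement with the known analyticity of the Euler-Bernoulli case, which suggests the construction is sharp.
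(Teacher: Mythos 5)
Your proposal is correct and follows essentially the same route as the paper: both arguments test the resolvent bound \eqref{Analyticity} on the eigenvectors $e_n$ of $A$ at the resonant frequencies $\lambda_n^2(1+\gamma\sigma_n^\tau)=\sigma_n^2$, and both arrive at the same divergence rate $\sigma_n^{\tau/2}=|\lambda_n|^{\tau/(2-\tau)}$. The only difference is one of direction -- the paper fixes $F_n=(0,-e_n,0)$ and solves the resulting $2\times 2$ algebraic system for $U_n$, whereas you construct the quasimode $U_n$ first and compute the residual $F_n$, which avoids solving the system but is otherwise the same computation.
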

 \begin{proof}
We apply Theorem \eqref{LiuZAnaliticity}   to show this result.  Consider the eigenvalues and eigenvectors of the operator $A$ as in \eqref{auto-10}.  Let $F_n=(0,-e_n,0)\in \mathbb{H}$. The solution $U_n=(u_n,v_n,\theta_n)$ of the system $(i\lambda_n I-\mathbb{B})U_n=F_n$ satisfies $v_n=i\lambda_n u_n$ and the following equations
\begin{eqnarray*}
 \lambda^2_n (I+\gamma A^\tau) u_n-A^2u_n+\alpha A\theta_n&=& (I+\gamma A^\tau)e_n,\\
 i\lambda\theta_n +\kappa A\theta_n+i\lambda_n\beta Au_n& = & 0.
\end{eqnarray*}
 Let us see whether this system admits solutions of the form
 \begin{equation*}
    u_n=\mu \tilde{e}_n,\quad \theta_n=\nu \tilde{e}_n,
 \end{equation*}
for some complex numbers $\mu_n$ and $\nu_n$. Then, the numbers $\mu_n$, $\nu_n$ should satisfy the algebraic system
\begin{eqnarray}\label{eq01systemotimal}
 \{\lambda^2_n (1+\gamma \sigma_n^\tau)- \sigma_n^2\}\mu_n+\alpha\sigma_n\nu_n&=& (1+\gamma \sigma_n^\tau),\\
 \label{eq02systemotimal}
 i\lambda\beta\sigma_n\mu_n+(i\lambda+\kappa\sigma_n)\nu_n& = & 0.
\end{eqnarray}
In this point, we introduce the numbers
\begin{equation*}
\lambda_n^2:=\dfrac{\sigma_n^2}{1+\gamma\sigma_n^\tau}.
\end{equation*}
Thus, if we introduce the notation $x_n\approx y_n$ meaning that $\displaystyle\lim_{n\con\infty}\frac{|x_n|}{|y_n|}$ is a positive real number, we have that
\beqae
|\lambda_n|\approx |\sigma_n|^\frac{2-\tau}{2}.
\eeqae
With these considerations we have that to solving this system \eqref{eq01systemotimal}--\eqref{eq02systemotimal}, we find that
\begin{equation*}
|\mu_n|=\Big| \dfrac{\lambda_n+\gamma\lambda_n\sigma_n^\tau+i\kappa[\gamma\sigma_n^{1+\tau}+\sigma_n]}{\alpha\beta\lambda_n\sigma_n^2} \Big|\approx|\sigma_n|^\frac{3\tau-4}{2}\approx|\lambda_n|^\frac{3\tau-4}{2-\tau}.
\end{equation*}
 Therefore,  the solution $U_n$ of the system $(i\lambda_n-\mathbb{B})U_n=F_n$ satisfy 
 $$\|U_n\|_\mathbb{H}\geq K\|v_n\|_{D(A^\frac{\tau}{2})}=K|\lambda_n|\|u_n\|_{D(A^\frac{\tau}{2})}=K|\lambda_n| |\mu_n|\|e_n\|_{D(A^\frac{\tau}{2})}=K| \lambda_n|^\frac{2\tau-2}{2-\tau}.$$
     Then 
 \begin{equation*}
|\lambda_n| \|U_n\|_\mathcal{H}\geq K|\lambda|^\frac{\tau}{2-\tau}.
 \end{equation*}
Therefore $\|\lambda_n|\|U_n\|_\mathbb{H}\to\infty$ for $\tau>0$    approaches infinity as $|\lambda_n|\to\infty$. Therefore the \eqref{Analyticity} condition fails. consequently  for $ \tau>0$,  to semigroup $S(t)$ is not analytic,  in particular $S(t)$ is not analytic for $ \tau\in (\frac{1}{2},1]$. This completes the proof of this theorem.

\end{proof}

\subsection{Gevrey Class}

Before exposing our results, it is useful to recall the next definition and result  presented in \cite{Tebou2020} (adapted from
\cite{TaylorM}, Theorem 4, p. 153]).

\begin{definition}\label{Def1.1Tebou} Let $t_0\geq 0$ be a real number. A strongly continuous semigroup $S(t)$, defined on a Banach space $ \mathbb{H}$, is of Gevrey class $s > 1$ for $t > t_0$, if $S(t)$ is infinitely differentiable for $t > t_0$, and for every compact set $K \subset (t_0,\infty)$ and each $\mu > 0$, there exists a constant $ C = C(\mu, K) > 0$ such that
    \begin{equation}\label{DesigDef1.1}
    ||S^{(n)}(t)||_{\mathcal{L}( \mathcal{H})} \leq  C\mu ^n(n!)^s,  \text{ for all } \quad t \in K, n = 0,1,2...
    \end{equation}
\end{definition}
\begin{theorem}[\cite{TaylorM}]\label{Theorem1.2Tebon}
    Let $S(t)$  be a strongly continuous and bounded semigroup on a Hilbert space $ \mathbb{H}$. Suppose that the infinitesimal generator $\mathbb{B}$ of the semigroup $S(t)$ satisfies the following estimate, for some $0 < \phi < 1$:
    \begin{equation}\label{Eq1.5Tebon2020}
    \lim\limits_{|\lambda|\to\infty} \sup |\lambda |^\phi ||(i\lambda I-\mathbb{B})^{-1}||_{\mathcal{L}( \mathbb{H})} < \infty.
    \end{equation}
    Then $S(t)$  is of Gevrey  class  $s$   for $t>0$,  for every   $s >\dfrac{1}{\phi}$.
    
\end{theorem}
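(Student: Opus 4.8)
The plan is to prove this criterion through the Dunford--Taylor (inverse Laplace) representation of the semigroup, combined with a Neumann--series extension of the resolvent bound \eqref{Eq1.5Tebon2020} off the imaginary axis. Throughout I write $\lambda=\xi+i\eta$ and use that, since $S(t)$ is bounded, its generator satisfies $\{\mathrm{Re}\,\lambda>0\}\subset\rho(\mathbb{B})$ with $\|(\lambda I-\mathbb{B})^{-1}\|\leq M/\mathrm{Re}\,\lambda$, so that $\Gamma$ can later be closed in the right half--plane.

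First I would extend the estimate into the left half--plane. For $|\eta|\geq\eta_0$ large, \eqref{Eq1.5Tebon2020} gives $\|(i\eta I-\mathbb{B})^{-1}\|\leq C|\eta|^{-\phi}$. Factoring $\lambda I-\mathbb{B}=(i\eta I-\mathbb{B})\,[\,I+\xi(i\eta I-\mathbb{B})^{-1}\,]$ and summing the Neumann series whenever $|\xi|\,\|(i\eta I-\mathbb{B})^{-1}\|\leq\tfrac12$, I obtain that the region
\[
\Sigma:=\Big\{\,\xi+i\eta\ :\ |\eta|\geq\eta_0,\ |\xi|\leq \tfrac{1}{2C}|\eta|^{\phi}\,\Big\}
\]
lies in $\rho(\mathbb{B})$ and that there $\|(\lambda I-\mathbb{B})^{-1}\|\leq 2C|\eta|^{-\phi}$. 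The boundary of $\Sigma$ behaves like the curve $\xi=-\tfrac{1}{2C}|\eta|^{\phi}$, which is exactly the profile needed for the contour.

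Next I would fix $\Gamma$ to run along $\xi=-\tfrac{1}{2C}|\eta|^{\phi}$ for $|\eta|\geq\eta_0$, closed by a bounded arc around the origin inside $\rho(\mathbb{B})$, and use
\[
S^{(n)}(t)=\frac{1}{2\pi i}\int_{\Gamma}\lambda^{n}e^{\lambda t}(\lambda I-\mathbb{B})^{-1}\,d\lambda,\qquad t>0,\ n\geq 0,
\]
whose absolute convergence for every $n$ simultaneously yields that $S(t)$ is $C^\infty$ for $t>0$. On the unbounded part of $\Gamma$ one has $|e^{\lambda t}|=e^{\xi t}=e^{-\frac{t}{2C}|\eta|^{\phi}}$, $|\lambda|^{n}\lesssim|\eta|^{n}$, $\|(\lambda I-\mathbb{B})^{-1}\|\leq 2C|\eta|^{-\phi}$ and $|d\lambda|\lesssim d\eta$, so the tail is dominated by the integral
\[
\int_{\eta_0}^{\infty}\eta^{\,n-\phi}e^{-a t\eta^{\phi}}\,d\eta\ \asymp\ \Gamma\!\Big(\tfrac{n+1}{\phi}-1\Big)(at)^{\,1-\frac{n+1}{\phi}},\qquad a=\tfrac{1}{2C},
\]
obtained via the substitution $u=\eta^{\phi}$; the bounded arc contributes at most $(\mathrm{const})^{n}$ and is harmless. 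Stirling's formula gives $\Gamma(n/\phi)\asymp n^{n/\phi}(\mathrm{const})^{n}$, and with $n^{n}\asymp n!\,e^{n}$ this yields $\|S^{(n)}(t)\|\leq C_K\,(\mathrm{const}(K))^{n}(n!)^{1/\phi}$ uniformly for $t$ in a compact $K\subset(0,\infty)$. Since $s>1/\phi$, the factor $(n!)^{1/\phi-s}$ decays super--exponentially and absorbs any geometric factor, so for each $\mu>0$ there is $C=C(\mu,K)$ with $\|S^{(n)}(t)\|\leq C\mu^{n}(n!)^{s}$, which is precisely \eqref{DesigDef1.1}.

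The hard part will be the third step: rigorously justifying the contour representation of $S^{(n)}(t)$ and the deformation of $\Gamma$ onto the parabolic boundary of $\Sigma$, including control of the resolvent on the closing arc and the limiting argument passing from Laplace inversion on a vertical line to the deformed contour. Once the region $\Sigma$ and the representation are secured, the quantitative Gevrey estimate reduces to the routine Gamma--function asymptotics of the last step.
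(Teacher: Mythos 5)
This statement is quoted from Taylor's thesis \cite{TaylorM} (via \cite{Tebou2020}); the paper itself offers no proof, so there is nothing internal to compare against. Your outline is the standard proof of this criterion and is essentially correct: the Neumann-series extension of the resolvent bound to the region $|\mathrm{Re}\,\lambda|\lesssim|\mathrm{Im}\,\lambda|^{\phi}$, the deformation of the inverse-Laplace contour onto the curve $\xi=-a|\eta|^{\phi}$, and the $\Gamma$-function/Stirling computation giving $\|S^{(n)}(t)\|\leq C_K(\mathrm{const})^{n}(n!)^{1/\phi}$ (after which $s>1/\phi$ absorbs the geometric factor) are exactly the ingredients of Taylor's argument. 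Two points deserve care beyond what you wrote. First, the hypothesis \eqref{Eq1.5Tebon2020} says nothing about the spectrum on $i\mathbb{R}$ near the origin, so the bounded closing arc cannot simply encircle the origin; it must connect the two endpoints of the unbounded branches by passing through the open right half--plane, where boundedness of $S(t)$ guarantees $\{\mathrm{Re}\,\lambda>0\}\subset\rho(\mathbb{B})$. Second, as you anticipate, the vertical-line inversion integral for $S(t)$ itself is not absolutely convergent (the resolvent does not decay on vertical lines for a merely bounded semigroup); the standard remedy is to establish the contour representation first for $S(t)x$ with $x\in D(\mathbb{B}^{2})$, where $(\lambda I-\mathbb{B})^{-1}x=\lambda^{-1}x+\lambda^{-2}\mathbb{B}x+\lambda^{-2}(\lambda I-\mathbb{B})^{-1}\mathbb{B}^{2}x$ supplies integrable decay, deform the contour there, and then extend by density using the uniform operator bounds obtained on $\Gamma$. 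With those two repairs your argument closes; the quantitative step is exactly as you describe.
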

Our main result in this subsection is as follows:
\begin{theorem} Let  $S(t)=e^{\mathbb{B}t}$  strongly continuos-semigroups of contractions on the Hilbert space $ \mathbb{H}$, the semigroups $S(t)$ is of Grevrey class $s$ for every $s>\frac{3-\tau}{2-2\tau}$ for $\tau\in [\dfrac{1}{2},1)$, as there exists a positive constant $C$ such that we have the resolvent estimative:
    \begin{equation}\label{Eq1.6Tebon2020}
    |\lambda |^\frac{2-2\tau}{3-\tau} ||(i\lambda I-\mathbb{B})^{-1}||_{\mathcal{L}( \mathbb{H})} \leq C, \quad \lambda\in\rho(\mathbb{B})\subset \R.
    \end{equation}
\end{theorem}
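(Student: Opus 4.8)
The plan is to deduce the Gevrey property from Theorem~\ref{Theorem1.2Tebon} applied with $\phi=\frac{2-2\tau}{3-\tau}$; since $\tau\in[\frac12,1)$ gives $0<\phi<1$ and $\frac1\phi=\frac{3-\tau}{2-2\tau}$, everything reduces to the resolvent bound \eqref{Eq1.6Tebon2020} for large $|\lambda|$ (for bounded $\lambda$ the resolvent exists by Lemma~\ref{EImaginary} and is locally bounded). Writing $U=(u,v,\theta)$ for the solution of $(i\lambda I-\mathbb{B})U=F$ and $X:=\|F\|_{\mathbb{H}}\|U\|_{\mathbb{H}}$, the bound \eqref{Eq1.6Tebon2020} is, exactly as in \eqref{EquivAnaliticity}, equivalent to
\begin{equation*}
|\lambda|^{\phi}\|U\|_{\mathbb{H}}^2\leq C_\delta X,\qquad |\lambda|>\delta.
\end{equation*}

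First I would remove the displacement and temperature components. Multiplying Lemma~\ref{Lemma002Exp} by $|\lambda|^{\phi-1}$ and using $\phi<1$, so that $|\lambda|^{\phi-1}\leq\delta^{\phi-1}$ for $|\lambda|>\delta$, gives
\begin{equation*}
|\lambda|^{\phi}\big[\beta\|Au\|^2+\alpha\|\theta\|^2\big]\leq|\lambda|^{\phi}\beta\|v\|_{D(A^{\tau/2})}^2+C_\delta X.
\end{equation*}
In view of the norm \eqref{NORM} this yields $|\lambda|^{\phi}\|U\|_{\mathbb{H}}^2\leq 2|\lambda|^{\phi}\beta\|v\|_{D(A^{\tau/2})}^2+C_\delta X$, so the whole estimate is reduced to the single velocity term
\begin{equation}\label{planmain}
|\lambda|^{\phi}\|v\|_{D(A^{\tau/2})}^2\leq C_\delta X.
\end{equation}

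To obtain \eqref{planmain} I would split $\|v\|_{D(A^{\tau/2})}^2=\|v\|^2+\gamma\|A^{\tau/2}v\|^2$ and interpolate the intermediate power between $A^0$ and $A^{1/2}$, namely $\|A^{\tau/2}v\|^2\leq\|v\|^{2(1-\tau)}\|A^{1/2}v\|^{2\tau}$. Lemma~\ref{Lemma8}, which is precisely where the hypothesis $\tau\geq\frac12$ enters, supplies $\|A^{1/2}v\|^2\leq C_\delta X$ with no loss in $\lambda$, hence $\|A^{1/2}v\|^{2\tau}\leq(C_\delta X)^{\tau}$. Consequently \eqref{planmain} would follow from the single low-order estimate
\begin{equation}\label{planlow}
\|v\|^2\leq C_\delta\,|\lambda|^{-\frac{2}{3-\tau}}\,X,
\end{equation}
because then, using \eqref{planlow} and Lemma~\ref{Lemma8}, $|\lambda|^{\phi}\|A^{\tau/2}v\|^2\leq|\lambda|^{\phi}\big(C_\delta|\lambda|^{-\frac{2}{3-\tau}}X\big)^{1-\tau}(C_\delta X)^{\tau}=C_\delta X$ (the power of $|\lambda|$ cancels since $\phi=\frac{2(1-\tau)}{3-\tau}$), while $|\lambda|^{\phi}\|v\|^2\leq C_\delta|\lambda|^{\phi-\frac{2}{3-\tau}}X\leq C_\delta X$ because $\phi\leq\frac{2}{3-\tau}$.

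The hard part is \eqref{planlow}. The bounded–resolvent information of Lemmas~\ref{Lemma001Exp} and \ref{Lemma002Exp} together with the dissipation \eqref{Pdis-10} only give $\|v\|^2\leq C_\delta X$ with no gain in $|\lambda|$, so a genuinely new multiplier is needed. I would eliminate $u$ from \eqref{Pesp-20} via $u=\frac{1}{i\lambda}(v+f)$ coming from \eqref{Pesp-10}, which turns the momentum equation into
\begin{equation*}
\big[\lambda^2(I+\gamma A^\tau)-A^2\big]v=A^2f-i\lambda(I+\gamma A^\tau)g-i\lambda\alpha A\theta,
\end{equation*}
and then test it against a suitable negative power of $A$ acting on $v$. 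For frequencies away from the resonance $\sigma\sim|\lambda|^{2/(2-\tau)}$ the symbol $\lambda^2(1+\gamma\sigma^\tau)-\sigma^2$ is bounded below and $v$ is controlled directly; on the resonant layer the decisive contribution is the thermal coupling $-i\lambda\alpha A\theta$, which I would absorb using $\kappa\alpha\|A^{1/2}\theta\|^2\leq X$ from \eqref{Pdis-10}, after observing that there the heat equation \eqref{Pesp-30} forces $\theta$ to be comparable with $v$. Balancing the width of the resonant layer against this dissipation bound is what produces the exponent $\frac{2}{3-\tau}$ in \eqref{planlow}; this frequency–domain balancing is the main obstacle, and the gap between the resulting class and the obstruction $\frac{2-2\tau}{2-\tau}$ implicit in the construction of Theorem~\ref{AnaliticidadePlaca-20} reflects its non-optimality. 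Once \eqref{planlow} is established, the three steps above assemble into $|\lambda|^{\phi}\|U\|_{\mathbb{H}}^2\leq C_\delta X$, hence \eqref{Eq1.6Tebon2020}, and Theorem~\ref{Theorem1.2Tebon} delivers the Gevrey class $s>\frac{3-\tau}{2-2\tau}$.
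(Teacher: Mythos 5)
Your overall architecture is sound and, in its first two steps, coincides with the paper's: the reduction to the resolvent bound via Theorem \ref{Theorem1.2Tebon}, the use of Lemma \ref{Lemma002Exp} to discharge the $\|Au\|$ and $\|\theta\|$ components onto the velocity term, and the use of Lemma \ref{Lemma8} to control $\|A^{1/2}v\|$ are all exactly what the paper does. Your interpolation bookkeeping is also arithmetically consistent: if one had
\begin{equation*}
\|v\|^{2}\leq C_\delta\,|\lambda|^{-\frac{2}{3-\tau}}\,\|F\|_{\mathbb{H}}\|U\|_{\mathbb{H}},
\end{equation*}
then $\|A^{\tau/2}v\|\leq\|v\|^{1-\tau}\|A^{1/2}v\|^{\tau}$ together with Lemma \ref{Lemma8} would indeed deliver $|\lambda|^{\phi}\|v\|^{2}_{D(A^{\tau/2})}\leq C_\delta\|F\|_{\mathbb{H}}\|U\|_{\mathbb{H}}$ with $\phi=\frac{2-2\tau}{3-\tau}$, and the theorem would follow.

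The genuine gap is that this low-order estimate — the single place where the exponent $\frac{2}{3-\tau}$, and hence the Gevrey class, is actually produced — is never proved. What you offer in its place (eliminating $u$ to get $[\lambda^2(I+\gamma A^\tau)-A^2]v=\cdots$, splitting frequencies into a resonant layer $\sigma\sim|\lambda|^{2/(2-\tau)}$ and its complement, and ``balancing'' the layer width against the thermal dissipation) is a heuristic, not an argument: you give no lower bound for the symbol off the resonant set, no quantitative width for the layer, and no proof that the heat equation \eqref{Pesp-30} ``forces $\theta$ to be comparable with $v$'' there — indeed \eqref{Pdis-10} only controls $\|A^{1/2}\theta\|$, which says nothing pointwise in frequency about $v$ on a thin spectral band. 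The paper closes exactly this gap by a different and fully elementary device, the Liu--Renardy splitting $v=v_1+v_2$ of \eqref{Eq110AnalyR}: $v_2$ satisfies the purely algebraic relation $i\lambda(I+\gamma A^\tau)v_2=-A^2u+\alpha A\theta+Av_1$, so dividing by $\lambda$ gives the negative-order decay $\|A^{\frac{\tau}{2}-1}v_2\|\leq C|\lambda|^{-1}[\|F\|_{\mathbb{H}}\|U\|_{\mathbb{H}}]^{1/2}$ for free, while $v_1$ carries the $|\lambda|^{-1/2}$ gain from its own dissipative equation; interpolating $A^{\tau/2}$ between $A^{\frac{\tau}{2}-1}$ and $A^{1/2}$ (rather than between $A^{0}$ and $A^{1/2}$ as you propose) then yields $\|A^{\tau/2}v\|\leq C|\lambda|^{-\frac{1-\tau}{3-\tau}}[\|F\|_{\mathbb{H}}\|U\|_{\mathbb{H}}]^{1/2}$, which is equivalent to what you need. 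In short: your proof would be complete only after importing this splitting (or some rigorous substitute), and as written the decisive step is missing.
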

\begin{proof}
Note that the estimate 
\begin{equation}\label{EqEquiv1.6Tebou}
|\lambda|^\frac{2-2\tau}{3-\tau} ||(i\lambda I-\mathbb{B})^{-1}||_\mathbb{H}=|\lambda|^\frac{2-2\tau}{3-\tau}\|U\|_\mathbb{H}\leq C_\delta\|F\|_\mathbb{H}
\end{equation}
 implies the inequality \eqref{Eq1.6Tebon2020}.  Therefore from now on we will show \eqref{EqEquiv1.6Tebou},  for this purpose let us estimate the term $|\lambda|\|A^\frac{\tau}{2}v\|^2$,  we assume $\lambda\in\rho(\mathbb{B})\subset \R$ with  $|\lambda|>1$, we shall borrow some ideias from \cite{LiuR95}. Set $v=v_1+v_2$, where $v_1\in D(A)$ and $v_2\in D(A^\frac{\tau}{2})$, with
    \begin{equation}\label{Eq110AnalyR}
    i\lambda(I+\gamma A^\tau A)v_1+Av_1=(I+\gamma A^\tau)g, \hspace{0.33cm} i\lambda(I+\gamma A^\tau )v_2=-A^2u+\alpha A\theta+Av_1.
    \end{equation}
    Firstly,  applying the product duality  the first equation in \eqref{Eq110AnalyR} by $v_1$, we have
    \begin{equation}\label{Eq111AnalyR}
    i\lambda\|v_1\|^2+i\lambda\gamma\|A^\frac{\tau}{2}v_1\|^2+ \|A^\frac{1}{2} v_1\|^2=\dual{g}{v_1}+\gamma\dual{A^\frac{\tau}{2}g} {A^\frac{\tau}{2}v_1}.
    \end{equation}
    Taking first the imaginary part of \eqref{Eq111AnalyR} and in the sequence the real part and applying Cauchy-Schwarz inequality, we have
    \begin{eqnarray*}
    |\lambda|\|v_1\|^2+\gamma|\lambda|\|A^\frac{\tau}{2}v_1\|\leq |\rm{Im}\dual{g}{v_1}|+\gamma|\rm{Im}\dual{A^\frac{\tau}{2}g}{A^\frac{\tau}{2}v_1}| \leq C\|F\|_\mathbb{H}\|U\|_\mathbb{H}\\
     \text{and}\\
    \hspace*{-4cm}\|A^\frac{1}{2}v_1\|^2\leq C\|F\|_\mathbb{H}\|U\|_\mathbb{H}.
    \end{eqnarray*}
    Equivalently
    \begin{eqnarray}\label{Eq112AnalyR}
    \|v_1\|\leq C\dfrac{[ \|F\|_\mathbb{H}\|U\|_\mathbb{H}]^\frac{1}{2}}{|\lambda|^\frac{1}{2}},\quad  \|A^\frac{\tau}{2}v_1\|\leq C\dfrac{ [\|F\|_\mathbb{H}\|U\|_\mathbb{H}]^\frac{1}{2}}{|\lambda|^\frac{1}{2}} \\
    \nonumber
    \rm{and}\\
    \label{Eq112AnalyR2}
     \|A^\frac{1}{2}v_1\|\leq C[\|F\|_\mathbb{H}\|U\|_\mathbb{H}]^\frac{1}{2}.
    \end{eqnarray}
   From $A^\frac{\tau}{2}v_2=A^\frac{\tau}{2}v+A^\frac{\tau}{2}v_1$, we have
\begin{eqnarray*}
\|A^\frac{\tau}{2}v_2\|^2 &\leq & C \{\|A^\frac{\tau}{2}v\|^2+\|A^\frac{\tau}{2}v_1\|^2\} \leq C\bigg\{\dfrac{|\lambda|+1}{|\lambda|}\bigg\} \|F\|_\mathbb{H}\|U\|_\mathbb{H}.
\end{eqnarray*}
From Lemma\eqref{Lemma001Exp}   and second inequality of  \eqref{Eq112AnalyR}. Let $\delta>0$. Exists a positive constant $C_\delta$ such that for $|\lambda|\geq\delta$, we obtain
    \begin{equation}\label{Eq01EstW}
    \|A^\frac{\tau}{2}v_2\|^2\leq C_\delta  \|F\|_\mathcal{H}\|U\|_\mathcal{H}.
    \end{equation}
    From second equation of  \eqref{Eq110AnalyR}, we have
    \begin{eqnarray*}
|\lambda||(I+\gamma A^\tau)[A^{-1}v_2]|| & \leq &C\{ \|Au\|+\|\theta\|\}+\|v_1\|,
    \end{eqnarray*}
    then, we  find
    \begin{equation*}
    |\lambda|(\|A^{-1} v_2\|^2+\gamma\|A^{\frac{\tau}{2}-1}v_2\|^2)^\frac{1}{2} \leq C\{\|Au\|+\|\theta\|\}+\|v_1\|,
    \end{equation*}
    applying Cauchy-Schwarz and Young inequalities and using first inequality of \eqref{Eq112AnalyR} and estimative \eqref{Pdis-10}   for $|\lambda|^\frac{1}{2}>1$, for $0\leq\tau\leq 1$,   we obtain
    \begin{eqnarray}\nonumber
    |\lambda|\|A^{\frac{\tau}{2}-1}v_2\| & \leq & C[\|F\|_\mathbb{H}\|U\|_\mathbb{H}]^\frac{1}{2}\bigg[\dfrac{|\lambda|^\frac{1}{2}+1}{|\lambda|^\frac{1}{2}}\bigg]\\
    \label{Eq113AnalyR}
    \|A^{\frac{\tau}{2}-1}v_2\|&\leq &C|\lambda|^{-1} [\|F\|_\mathbb{H}\|U\|_\mathbb{H}]^\frac{1}{2}
    \end{eqnarray}
    On the  other hand, from $v_2=v-v_1$, we  have
    \begin{equation}\label{Eq114AnalyR}
    \|A^\frac{1}{2} v_2\|\leq \|A^\frac{1}{2} v\|+\|A^\frac{1}{2}v_1\|.
    \end{equation}

    From  Lemma\eqref{Lemma8} and \eqref{Eq112AnalyR2},  we have
 \begin{equation}\label{Eq000Gevrey}
  \|A^\frac{1}{2} v_2\|\leq C_\delta[ \|F\|_\mathbb{H}\|U\|_\mathbb{H}]^\frac{1}{2}\quad{\rm for}\quad \dfrac{1}{2}\leq\tau\leq 1.
    \end{equation}
 
 Now, by Lion's interpolations inequality ($\frac{\tau}{2}-1< \frac{\tau}{2}\leq \frac{1}{2}$)  and estimatives \eqref{Eq113AnalyR} and \eqref{Eq000Gevrey},  we derive
 \begin{eqnarray*}
 \|A^\frac{\tau}{2}v_2|| \hspace*{-0.05cm}& \leq &\hspace*{-0.05cm} C\|A^{\frac{\tau}{2}-1}v_2\|^\frac{1-\tau}{3-\tau}\|A^\frac{1}{2}v_2\|^\frac{2}{3-\tau}\quad{\rm for}\quad \dfrac{1}{2}\leq \tau \leq 1\\
 &\leq &C |\lambda|^{-\frac{1-\tau}{3-\tau}}[\|F\|_\mathbb{H}\|U\|_\mathbb{H}]^\frac{1}{2}\quad{\rm for}\quad \dfrac{1}{2}\leq\tau\leq 1.
 \end{eqnarray*}
 
 On the other hand,  as $\|A^\frac{\tau}{2}v\|\leq C\{\|A^\frac{\tau}{2}v_1\|+\|A^\frac{\tau}{2}v_1\|\}$ and $-\frac{1-\tau}{3-\tau}\geq -\frac{1}{2}$, we have
 \begin{eqnarray*}
 \|A^\frac{\tau}{2} v\| &\leq & C\{ |\lambda|^{-\frac{1-\tau}{3-\tau}}+|\lambda|^{-\frac{1}{2}}\}[\|F\|_\mathbb{H}\|U\|_\mathbb{H}]^\frac{1}{2}\\
 & \leq &  C|\lambda|^{-\frac{1-\tau}{3-\tau}}[\|F\|_\mathbb{H}\|U\|_\mathbb{H}]^\frac{1}{2}\quad{\rm for}\quad \dfrac{1}{2}\leq \tau\leq 1.
 \end{eqnarray*}
    Equivalently
 \begin{equation}\label{Eq001Gevrey}
 |\lambda|\|A^\frac{\tau}{2} v\|^2 \leq C_\delta|\lambda|^\frac{1+\tau}{3-\tau}\|F\|_\mathbb{H}\|U\|_\mathbb{H}\quad{\rm for }\quad \dfrac{1}{2}\leq\tau\leq 1.
 \end{equation}
 
 Using  \eqref{Eq001Gevrey} in  estimative  \eqref{Exp006} to Lemma\eqref{Lemma002Exp},   we have
 \begin{equation}\label{Eq002Gevrey}
 |\lambda|[\beta\|Au\|^2+\alpha \|\theta\|^2]\leq C_\delta|\lambda|^\frac{1+\tau}{3-\tau}\|F\|_\mathbb{H}\|U\|_\mathbb{H}\qquad{\rm for}\qquad \dfrac{1}{2}\leq\tau\leq 1.
 \end{equation}
 And as \eqref{Eq001Gevrey} it implies 
 \begin{equation}\label{Eq003Gevrey}
  |\lambda|\beta\| v\|^2_{D(A^\frac{\tau}{2})} \leq C_\delta|\lambda|^\frac{1+\tau}{3-\tau}\|F\|_\mathbb{H}\|U\|_\mathbb{H}\quad{\rm for }\quad \dfrac{1}{2}\leq\tau\leq 1.
 \end{equation}
Finally, using \eqref{Eq002Gevrey} and \eqref{Eq003Gevrey}.  We finish the proof of this theorem.
\end{proof}

\begin{remark}[Asymptotic Behavior] 
Note that the semigroup with class Gevrey has more regular properties than a differentiable semigroup, but is less regular than an analytic semigroup. The Gevrey rate $s>\frac{1}{\phi}$ `measures' the degree of divergence of its power series. It should be noted that the Gevrey class or analyticity of the particular model implies three important properties. The first is the property of the smoothing effect on the initial data, that is, no matter how irregular the initial data is, the model solutions are very smooth in positive time. The second property is that systems are exponentially stable. Finally, the systems enjoy the property of linear stability, which means that the type
of the semigroup is equal to the spectral limit of its infinitesimal operator. Thus, as a consequence of the lemmas \eqref{Lemma001Exp} and \eqref{EImaginary},  we have that the semigroup $S(t)=e^{\mathbb{B}t}$ is exponentially stable for all $\tau\in [ 0.1]$.
\end{remark}



\begin{thebibliography}{99}

\bibitem{AvalosL1997}
G.  Avalos and I. Lasiecka, Exponential stability of  a thermoelastic system without mechanical dissipation, Rend. Istit.  Mat. Univ. Trieste,  28(1997),  pp 1-29.

\bibitem{AvalosL1998} 
G. Avalos and I. Lasiecka, Exponential stability of a thermoelastic system with free boundary conditions without mechanical dissipation, SIAM J. Math. Anal., 29 (1998),  pp 1-28.

\bibitem{Borichev}
A. Borichev and Y. Tomilov, Optimal polynomial decay of functions and operator semigroups, Math. Ann. 347 (2010),  pp 455-478.

\bibitem{Engel-Nagel-2000}
K.J. Engel and R. Nagel, One-parameter semigroups for linear evolution equations, Springer (2000).

 \bibitem{Tebou2020}
 V. Keyantuo, L. Tebou and M. Warma,  A Gevrey Class Semigroup for a Thermoelastic Plate Model with a Fractional Laplacian: Berween the Euler-Bernoulli and Kirchhoff Models. Discrete and Continuous Dynamical System,  Vol 40. Number 5, May (2020), pp 2875-2889.
 
\bibitem{LT1998}
I. Lasiecka  and Triggiani, R.: Analyticity and lack thereof,  of thermo-elastic semigroups. In: Control and Partial Differential
Equations, ESAIM Proc.  Soc.  Math.  Appl.  Indust., Paris, vol. 4,  (1998),  pp 199-222.

\bibitem{LT1998A}
I.  Lasiecka and R. Triggiani,  Analyticity of thermo-elastic semigroups with free boundary conditions. Ann. Scuola Norm. Sup. Pisa Cl. Sci. (4), 27 (1998),  pp 457-482.

\bibitem{LT1998B}
I. Lasiecka and R. Triggiani,  Two direct proofs on the analyticity  of the s.c semigroup arising in abstract thermo-elastic equations, Adv. Differential Equations, 3 (1998),  pp 387-416.

\bibitem{LT1998C}
I.  Lasiecka   and R.  Triggiani,   Analyticity of thermo-elastic semigroups with coupled hinged/Neumann B.C.   Abstr. Appl.  Anal., 3  (1998),  pp 153-169.

\bibitem{LT2000}
I.  Lasiecka   and R.  Triggiani,   Structural decomposition of thermo-elastic semigroups with rotational forces.  Semigroup Forum 60  (2000),  pp 16-66.

\bibitem{LiuLiu1997}
 K. S. Liu and Z. Y.  Liu, Exponential stability and analyticity of abstract linear thermoelastic systems, Z. Angew.  Math.  Phys. 48 (1997) 885-904.

\bibitem{LiuR95}
Z. Liu and M. Renardy, A note on the equations of thermoelastic plate,  Appl. Math. Lett.,  8,  (1995), pp 1-6.

\bibitem{LiuZ1997}
Z. Y.  Liu and S. M.  Zheng, Exponential stability of the Kirchoff plate with thermal or viscoelastic damping,  Quarterly  Appl.   Math. , 55 (1997), 551-564.

\bibitem{LiuZ}
Z. Liu and S. Zheng, Semigroups associated with dissipative
systems,  Chapman \& Hall CRC Research Notes in Mathematics,
Boca Raton, FL, 398 (1999).

\bibitem{HPFredy2019}
H.P.  Oquendo and F.M.S.  Suárez,  Exact decay rates for coupled plates with partial fractional damping.  Zeitschrift für angewandte Mathematik und Physik-ZAMP(online)V1, (2019), pp
70--88.

\bibitem{Pazy}
A. Pazy, Semigroups of Linear Operators and Applications to
Partial Differential Equations,  Applied Mathematical Sciences 44,
Springer, (1983).

\bibitem{HSLiuRacke2019}
H. D. F.  Sare, Z.  Liu  and R.  Racke,  Stability of abstract thermoelastic systems with inertial terms, Journal of Differential Equations,  Volume 267, Issue 12, 5 december (2019),  pp 7085--7134.


\bibitem{BrunaJMR2022}
B.T.S.  Sozzo, J. E. M. Rivera,  The Gevrey class of the Euler-Bernoulli beam model, Journal of Mathematical Analysis and Applications,505 (2022).

 \bibitem{TaylorM}
  S. W. Taylor, Gevrey Regularity of Solutions of Evolution Equations and Boundary Control-
  lability, Thesis (Ph.D.) University of Minnesota,  (1989),  182 pp.

 \bibitem{Tebou-2010}
 L. Tebou, Stabilization of some coupled hyperbolic/parabolic equations.
 Discrete Contin. Dyn. Syst. Ser. B 14 (2010), pp 1601-1620.
 
\bibitem{Tebon2010}
L. Tebon, Stabilization of some coupled hyperbolic/parabolic equations, Discrete and Continuous Dynamical Systems Series B 14, Number 4, November (2010).

 \bibitem{Tebou2012}
 L. T. Tebou,  Energy decay estimates for some weakly coupled Euler-Bernoulli and wave equations with indirect damping mechanisms.  Mathematical Control and Related Fields 2 (2012),  pp 45-60.

\bibitem{LTebou2013}
L. Tebou, Uniform analyticity and exponential decay of the semigroup associated with a thermoelastic plate equation with perturbed boundary conditions,  C.  R.  Math.  Acad. Sci.  Paris, 351  (2013),  pp 539-544.

\bibitem{Tebou2021}
L. Tebou,  Regularity and stability for a plate model involving frational rotational forces and damping,  ZAMP  Zeitschrift für angewandte Mathematik und Physik, 72:158 (2021). 





\end{thebibliography}
\end{document}